\date{\today}
\newtheorem{theorem}{Theorem}[section]
\newtheorem{proposition}[theorem]{Proposition}
\newtheorem{corollary}[theorem]{Corollary}
\newtheorem{lemma}[theorem]{Lemma}
\theoremstyle{definition}
\newtheorem{example}[theorem]{Example}
\newtheorem{remark}[theorem]{Remark}
\begin{document}

\title[Topological semigroups]{Topological monoids of
monotone injective partial selfmaps of $\mathbb{N}$ with
cofinite domain and image}

\author[O.~Gutik]{Oleg~Gutik}
\address{Department of Mathematics, Ivan Franko Lviv National
University, Universytetska 1, Lviv, 79000, Ukraine}
\email{o\underline{\hskip5pt}\,gutik@franko.lviv.ua,
ovgutik@yahoo.com}

\author[D.~Repov\v{s}]{Du\v{s}an~Repov\v{s}}
\address{Faculty of Mathematics and Physics, and
Faculty of Education, University of Ljubljana, P.~O.~B. 2964,
Ljubljana, 1001, Slovenia} \email{dusan.repovs@guest.arnes.si}

\keywords{Topological semigroup, topological inverse semigroup,
bicyclic semigroup, semigroup of bijective partial
transformations, closure}

\subjclass[2010]{Primary 22A15, 20M20. Secondary 20M18, 54H15}

\begin{abstract}
In this paper we study the semigroup
$\mathscr{I}_{\infty}^{\!\nearrow}(\mathbb{N})$ of partial cofinal
monotone bijective transformations of the set of positive integers
$\mathbb{N}$. We show that the semigroup
$\mathscr{I}_{\infty}^{\!\nearrow}(\mathbb{N})$ has algebraic
properties similar to the bicyclic semigroup: it is bisimple and
all of its non-trivial group homomorphisms are either isomorphisms
or group homomorphisms. We also prove that every locally compact
topology $\tau$ on $\mathscr{I}_{\infty}^{\!\nearrow}(\mathbb{N})$
such that $(\mathscr{I}_{\infty}^{\!\nearrow}(\mathbb{N}),\tau)$
is a topological inverse semigroup, is discrete. Finally, we describe the
closure of $(\mathscr{I}_{\infty}^{\!\nearrow}(\mathbb{N}),\tau)$
in a topological semigroup.
\end{abstract}

\maketitle


\section{Introduction and preliminaries}

Our purpose is
to study the semigroup
$\mathscr{I}_{\infty}^{\!\nearrow}(\mathbb{N})$ of partial cofinal
monotone bijective transformations of the set of positive integers
$\mathbb{N}$. We shall
show that the semigroup
$\mathscr{I}_{\infty}^{\!\nearrow}(\mathbb{N})$ has algebraic
properties similar to the bicyclic semigroup: it is bisimple and
all of its nontrivial group homomorphisms are either isomorphisms
or group homomorphisms.
We shall
also prove that every locally compact
topology $\tau$ on $\mathscr{I}_{\infty}^{\!\nearrow}(\mathbb{N})$
such that $(\mathscr{I}_{\infty}^{\!\nearrow}(\mathbb{N}),\tau)$
is a topological inverse semigroup is discrete and we
shall
describe the
closure of $(\mathscr{I}_{\infty}^{\!\nearrow}(\mathbb{N}),\tau)$
in a topological semigroup.

In this paper all spaces will be
assumed to be Hausdorff. Furthermore
we shall follow the terminology of \cite{CHK, CP, Engelking1989}.
 We shall denote the first infinite cardinal by $\omega$
 and  the cardinality of the set $A$
 by
$|A|$. If $Y$ is a subspace of a
topological space $X$ and $A\subseteq Y$, then  we shall
denote the topological closure and the interior of $A$ in $Y$
by
$\operatorname{cl}_Y(A)$
and
$\operatorname{Int}_Y(A)$,
respectively.

An algebraic semigroup $S$ is called {\it inverse} if for any
element $x\in S$ there exists a
unique $x^{-1}\in S$ such that
$xx^{-1}x=x$ and $x^{-1}xx^{-1}=x^{-1}$. The element $x^{-1}$ is
called the {\it inverse of} $x\in S$. If $S$ is an inverse
semigroup, then the function $\operatorname{inv}\colon S\to S$
which assigns to every element $x$ of $S$ its inverse element
$x^{-1}$ is called an {\it inversion}.

If $S$ is a semigroup, then we shall denote the
\emph{band} (i.~e. the subset of idempotents) of $S$
by $E(S)$. If the band
$E(S)$ is a nonempty subset of $S$, then the semigroup operation
on $S$ determines the partial order $\leqslant$ on $E(S)$:
$e\leqslant f$ if and only if $ef=fe=e$. This order is called {\em
natural}.

A \emph{semilattice} is a commutative semigroup of
idempotents. A semilattice $E$ is called {\em linearly ordered} or
\emph{chain} if the semilattice operation admits a linear natural
order on $E$. A \emph{maximal chain} of a semilattice $E$ is a
chain which is properly contained in no other chain of $E$. The
Axiom of Choice implies the existence of maximal chains in any
partially ordered set. According to
\cite[Definition~II.5.12]{Petrich1984} a
chain $L$ is called
an
$\omega$-chain if $L$ is isomorphic to $\{0,-1,-2,-3,\ldots\}$
with the usual order $\leqslant$. Let $E$ be a semilattice and
$e\in E$. We denote ${\downarrow} e=\{ f\in E\mid f\leqslant e\}$
and ${\uparrow} e=\{ f\in E\mid e\leqslant f\}$.

A {\it topological} ({\it inverse}) {\it semigroup} is a
topological space together with a continuous multiplication (and
an inversion, respectively).
Let $\mathscr{I}_\lambda$ denote the set of all partial one-to-one
transformations of a set $X$ of cardinality $\lambda$ together
with the following semigroup operation:
$x(\alpha\beta)=(x\alpha)\beta$ if
$x\in\operatorname{dom}(\alpha\beta)=\{
y\in\operatorname{dom}\alpha\mid
y\alpha\in\operatorname{dom}\beta\}$,  for
$\alpha,\beta\in\mathscr{I}_\lambda$. The semigroup
$\mathscr{I}_\lambda$ is called the \emph{symmetric inverse
semigroup} over the set $X$~(see \cite{CP}). The symmetric inverse
semigroup was introduced by Wagner~\cite{Wagner1952} and it plays
a major role in the theory of semigroups.

Let $\mathbb{N}$ be the set of all positive integers. We shall denote
the semigroup of monotone, non-decreasing, injective partial
transformations of $\mathbb{N}$ such that the sets
$\mathbb{N}\setminus\operatorname{dom}\varphi$ and
$\mathbb{N}\setminus\operatorname{rank}\varphi$ are finite for all
$\varphi\in\mathscr{I}_{\infty}^{\!\nearrow}(\mathbb{N})$
by
$\mathscr{I}_{\infty}^{\!\nearrow}(\mathbb{N})$ .
Obviously, $\mathscr{I}_{\infty}^{\!\nearrow}(\mathbb{N})$ is an
inverse subsemigroup of the semigroup $\mathscr{I}_\omega$. The
semigroup $\mathscr{I}_{\infty}^{\!\nearrow}(\mathbb{N})$ is
called \emph{the semigroup of cofinite monotone partial
bijections} of $\mathbb{N}$.

We shall denote every element $\alpha$ of the semigroup
$\mathscr{I}_{\infty}^{\!\nearrow}(\mathbb{N})$ by
 $
\left(%
\begin{array}{ccccc}
  n_1 & n_2 & n_3 & n_4 & \ldots \\
  m_1 & m_2 & m_3 & m_4 & \ldots \\
\end{array}%
\right)
 $
 and this means that $\alpha$ maps the
positive integer $n_i$ into $m_i$ for all $i=1,2,3,\ldots$~. In
this case the following conditions hold:
\begin{itemize}
    \item[$(i)$] the sets
    $\mathbb{N}\setminus\{n_1,n_2,n_3,n_4,\ldots\}$
    and $\mathbb{N}\setminus\{m_1, m_2, m_3, m_4, \ldots\}$ are
    finite;  and
    \item[$(ii)$] $n_1<n_2<n_3<n_4<\ldots$ and
    $m_1<m_2<m_3<m_4<\ldots$~.
\end{itemize}
We observe that an element $\alpha$ of the semigroup
$\mathscr{I}_{\omega}$ is an element of the semigroup
$\mathscr{I}_{\infty}^{\!\nearrow}(\mathbb{N})$ if and only if it
satisfies the conditions $(i)$ and $(ii)$.

The bicyclic semigroup ${\mathscr{C}}(p,q)$ is the semigroup with
the identity $1$,
generated by elements $p$ and $q$, subject only to
the condition $pq=1$. The bicyclic semigroup is bisimple and every
one of its congruences is either trivial or a group congruence.
Moreover, every non-annihilating homomorphism $h$ of the bicyclic
semigroup is either an isomorphism or the image of
${\mathscr{C}}(p,q)$ under $h$ is a cyclic group~(see
\cite[Corollary~1.32]{CP}). The bicyclic semigroup plays an
important role in algebraic theory of semigroups and in the theory
of topological semigroups.

For example,
the well-known result of
Andersen~\cite{Andersen} states that a ($0$--)simple semigroup is
completely ($0$--)simple if and only if it does not contain the
bicyclic semigroup. The bicyclic semigroup admits only the
discrete topology and a topological semigroup $S$ can contain
${\mathscr C}(p,q)$ only as an open
subset~\cite{EberhartSelden1969}. Neither stable nor
$\Gamma$-compact topological semigroups can contain a copy of the
bicyclic semigroup~\cite{AHK, HildebrantKoch1988}. Also, the
bicyclic semigroup does not embed into a countably compact
topological inverse semigroup~\cite{GutikRepovs2007}.

Moreover, the conditions were given in
\cite{BanakhDimitrovaGutik2009} and \cite{BanakhDimitrovaGutik20??}
when a countable compact or pseudocompact topological semigroup does
not contain the bicyclic semigroup. However, Banakh, Dimitrova and
Gutik constructed, using set-theoretic assumptions (Continuum
Hypothesis or Martin Axiom), an example of a Tychonoff countably
compact topological semigroup which contains the bicyclic
semigroup~\cite{BanakhDimitrovaGutik20??}.

We remark that the bicyclic semigroup is isomorphic to the
semigroup $\mathscr{C}_{\mathbb{N}}(\alpha,\beta)$ which is
generated by partial transformations $\alpha$ and $\beta$ of the
set of positive integers $\mathbb{N}$, defined as follows:
\begin{equation*}
    (n)\alpha=n+1  \quad \mbox{ if } \, n\geqslant 1, \qquad
    \mbox{and} \qquad
    (n)\beta=n-1   \quad \mbox{ if } \, n> 1.
\end{equation*}
Therefore the semigroup
$\mathscr{I}_{\infty}^{\!\nearrow}(\mathbb{N})$ contains an
isomorphic copy of the bicyclic semigroup ${\mathscr{C}}(p,q)$.


\section{Algebraic properties of the semigroup
$\mathscr{I}_{\infty}^{\!\nearrow}(\mathbb{N})$}

\begin{proposition}\label{proposition-2.1} The following properties hold:
\begin{itemize}
    \item[$(i)$] $\mathscr{I}_{\infty}^{\!\nearrow}(\mathbb{N})$
         is a simple semigroup.

    \item[$(ii)$] $\alpha\mathscr{R}\beta$ $(\alpha\mathscr{L}\beta)$ in
         $\mathscr{I}_{\infty}^{\!\nearrow}(\mathbb{N})$ if and
         only if
         $\operatorname{dom}\alpha=\operatorname{dom}\beta$
         $(\operatorname{rank}\alpha=\operatorname{rank}\beta)$.

    \item[$(iii)$] $\alpha\mathscr{H}\beta$ in
          $\mathscr{I}_{\infty}^{\!\nearrow}(\mathbb{N})$ if and
          only if $\alpha=\beta$.

    \item[$(iv)$] For every $\varepsilon,\iota\in
          E(\mathscr{I}_{\infty}^{\!\nearrow}(\mathbb{N}))$ there
          exists $\alpha\in
          \mathscr{I}_{\infty}^{\!\nearrow}(\mathbb{N})$
          such that $\alpha\alpha^{-1}=\varepsilon$ and
          $\alpha^{-1}\alpha=\iota$.

    \item[$(v)$] $\mathscr{I}_{\infty}^{\!\nearrow}(\mathbb{N})$
    is a bisimple semigroup.

    \item[$(vi)$] If $\varepsilon,\iota\in
          E(\mathscr{I}_{\infty}^{\!\nearrow}(\mathbb{N}))$, then
          $\varepsilon\leqslant\iota$ if and only if
          $\operatorname{dom}\varepsilon\subseteq
          \operatorname{dom}\iota$.

    \item[$(vii)$] The semilattice
          $E(\mathscr{I}_{\infty}^{\!\nearrow}(\mathbb{N}))$ is
          isomorphic to $(\mathscr{P}_{<\omega}(\mathbb{N}),\subseteq)$ under
          the mapping $(\varepsilon)h=\mathbb{N}\setminus
          \operatorname{dom}\varepsilon$.

    \item[$(viii)$] Every maximal chain in
          $E(\mathscr{I}_{\infty}^{\!\nearrow}(\mathbb{N}))$ is an
          $\omega$-chain.
\end{itemize}
\end{proposition}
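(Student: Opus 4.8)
The plan is to extract once and for all the combinatorial principle behind the whole proposition: for any two cofinite subsets $A,B\subseteq\mathbb{N}$ there is a \emph{unique} strictly increasing bijection $A\to B$ (the composite of the increasing enumerations $A\cong\mathbb{N}\cong B$), and it belongs to $\mathscr{I}_{\infty}^{\!\nearrow}(\mathbb{N})$ since its domain $A$ and image $B$ are cofinite. Two consequences will be used throughout: any $\alpha,\beta\in\mathscr{I}_{\infty}^{\!\nearrow}(\mathbb{N})$ with $\operatorname{dom}\alpha=\operatorname{dom}\beta$ and $\operatorname{rank}\alpha=\operatorname{rank}\beta$ must coincide; and, given idempotents $\varepsilon,\iota$ (which are exactly the partial identities on cofinite sets), the unique increasing bijection $\alpha\colon\operatorname{dom}\varepsilon\to\operatorname{dom}\iota$ satisfies $\alpha\alpha^{-1}=\varepsilon$ and $\alpha^{-1}\alpha=\iota$, because $\operatorname{dom}(\alpha\alpha^{-1})=\operatorname{dom}\alpha$ and $\operatorname{rank}(\alpha^{-1}\alpha)=\operatorname{rank}\alpha$. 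This already gives $(iv)$, and — once $(ii)$ is in hand — gives $(iii)$ via $\mathscr{H}=\mathscr{R}\cap\mathscr{L}$.

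For $(i)$ I would proceed directly: given $\alpha,\beta$, choose the increasing bijections $\gamma\colon\operatorname{dom}\beta\to\operatorname{dom}\alpha$ and $\delta\colon\operatorname{rank}\alpha\to\operatorname{rank}\beta$ in $\mathscr{I}_{\infty}^{\!\nearrow}(\mathbb{N})$; tracking domains and images shows $\gamma\alpha\delta$ is an increasing bijection of $\operatorname{dom}\beta$ onto $\operatorname{rank}\beta$, so $\gamma\alpha\delta=\beta$ by uniqueness, whence $\mathscr{I}_{\infty}^{\!\nearrow}(\mathbb{N})\,\alpha\,\mathscr{I}_{\infty}^{\!\nearrow}(\mathbb{N})=\mathscr{I}_{\infty}^{\!\nearrow}(\mathbb{N})$. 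For $(ii)$, ``$\Rightarrow$'' is immediate from $\alpha=\beta\nu\Rightarrow\operatorname{dom}\alpha\subseteq\operatorname{dom}\beta$ together with symmetry (dually for $\mathscr{L}$ with images), while for ``$\Leftarrow$'' I would take $\mu=\alpha^{-1}\beta$ and $\nu=\beta^{-1}\alpha$, check they are monotone with cofinite domain and image hence lie in $\mathscr{I}_{\infty}^{\!\nearrow}(\mathbb{N})$, and compute $\alpha\mu=(\alpha\alpha^{-1})\beta=\beta$, $\beta\nu=\alpha$ using that $\alpha\alpha^{-1}$ is the partial identity on $\operatorname{dom}\alpha=\operatorname{dom}\beta$; the $\mathscr{L}$-statement is the mirror argument with images. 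Part $(v)$ is then formal: $\mathscr{I}_{\infty}^{\!\nearrow}(\mathbb{N})$ being inverse, each element is $\mathscr{D}$-equivalent to an idempotent, and by $(iv)$ and $(ii)$ any two idempotents $\varepsilon,\iota$ satisfy $\varepsilon\,\mathscr{R}\,\alpha\,\mathscr{L}\,\iota$ for the $\alpha$ built above, so all idempotents lie in one $\mathscr{D}$-class; hence $\mathscr{I}_{\infty}^{\!\nearrow}(\mathbb{N})$ has a single $\mathscr{D}$-class.

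The band statements $(vi)$--$(viii)$ rest on the observation that, for idempotents, $\varepsilon\iota$ is the partial identity on $\operatorname{dom}\varepsilon\cap\operatorname{dom}\iota$. Then $(vi)$ follows since $\varepsilon\iota=\varepsilon$ is equivalent to $\operatorname{dom}\varepsilon\subseteq\operatorname{dom}\iota$ (commutativity handling $\iota\varepsilon$); and $(vii)$ follows since $(\varepsilon\iota)h=\mathbb{N}\setminus(\operatorname{dom}\varepsilon\cap\operatorname{dom}\iota)=(\varepsilon)h\cup(\iota)h$, so $h$ is a bijective homomorphism onto the semilattice of finite subsets of $\mathbb{N}$ under union (the partial identity with a prescribed finite complement existing and being unique), establishing the isomorphism, with $(vi)$ showing that $\leqslant$ corresponds to reverse inclusion of complements. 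Finally $(viii)$: a maximal chain $L$ in $E(\mathscr{I}_{\infty}^{\!\nearrow}(\mathbb{N}))$ is carried by $h$ to a maximal chain $\mathcal{F}$ of finite subsets of $\mathbb{N}$; maximality forces $\varnothing\in\mathcal{F}$ and forces consecutive members of $\mathcal{F}$ to differ in cardinality by exactly $1$ (otherwise a finite set could be inserted strictly between them, contradicting maximality), and $\mathcal{F}$ cannot terminate because every finite set has a strictly larger finite superset; thus $\mathcal{F}=\{\varnothing\subsetneq F_1\subsetneq F_2\subsetneq\cdots\}$ with $|F_n|=n$, order-isomorphic to $\omega$, and translating back through the order-reversing $h$ shows $L\cong\{0,-1,-2,\ldots\}$, an $\omega$-chain. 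The only step requiring a genuine (if modest) idea is this last insertion-and-nontermination argument; everything else is bookkeeping with domains and images together with the uniqueness of increasing bijections between cofinite subsets of $\mathbb{N}$.
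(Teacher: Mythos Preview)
Your proof is correct and follows the same route as the paper: the explicit constructions of $\gamma,\delta$ for $(i)$ and of $\alpha$ for $(iv)$ are exactly the paper's, and the derivations $(ii)\Rightarrow(iii)$ and $(iv)\,{+}\,(ii)\Rightarrow(v)$ are the same. The only difference is expository---where the paper cites Theorem~1.17 of Clifford--Preston for $(ii)$ and dismisses $(vi)$--$(viii)$ as trivial, you give direct self-contained arguments and make explicit the unifying combinatorial fact (the \emph{unique} increasing bijection between any two cofinite subsets of $\mathbb{N}$) that the paper uses tacitly throughout.
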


\begin{proof}
$(i)$ Let
 $
\alpha=
\left(%
\begin{array}{ccccc}
  n_1 & n_2 & n_3 & n_4 & \ldots \\
  m_1 & m_2 & m_3 & m_4 & \ldots \\
\end{array}%
\right)
 $
and
 $
\beta=
\left(%
\begin{array}{ccccc}
  k_1 & k_2 & k_3 & k_4 & \ldots \\
  l_1 & l_2 & l_3 & l_4 & \ldots \\
\end{array}%
\right)
 $
be any elements of the semigroup
$\mathscr{I}_{\infty}^{\!\nearrow}(\mathbb{N})$, where
$n_i,m_i,k_i,l_i\in\mathbb{N}$ for $i=1,2,3,\ldots$~.
We put
 $$
\gamma=
\left(%
\begin{array}{ccccc}
  k_1 & k_2 & k_3 & k_4 & \ldots \\
  n_1 & n_2 & n_3 & n_4 & \ldots \\
\end{array}%
\right) \ \
 \hbox{and} \ \
\delta=
\left(%
\begin{array}{ccccc}
  m_1 & m_2 & m_3 & m_4 & \ldots \\
  l_1 & l_2 & l_3 & l_4 & \ldots \\
\end{array}%
\right)
 .$$
Then we have that $\gamma\alpha\delta=\beta$. Therefore
$\mathscr{I}_{\infty}^{\!\nearrow}(\mathbb{N})\cdot\alpha\cdot
\mathscr{I}_{\infty}^{\!\nearrow}(\mathbb{N})=
\mathscr{I}_{\infty}^{\!\nearrow}(\mathbb{N})$ for any
$\alpha\in\mathscr{I}_{\infty}^{\!\nearrow}(\mathbb{N})$ and hence
$\mathscr{I}_{\infty}^{\!\nearrow}(\mathbb{N})$ is a simple
semigroup.

Statement $(ii)$ follows from  definitions of relations
$\mathscr{R}$ and $\mathscr{L}$ and Theorem~1.17 of
\cite{CP}. Also,
$(ii)$ implies $(iii)$.
For the idempotents $\varepsilon=
\left(%
\begin{array}{ccccc}
  m_1 & m_2 & m_3 & m_4 & \ldots \\
  m_1 & m_2 & m_3 & m_4 & \ldots \\
\end{array}%
\right)
 $
and $\iota=
\left(%
\begin{array}{ccccc}
  l_1 & l_2 & l_3 & l_4 & \ldots \\
  l_1 & l_2 & l_3 & l_4 & \ldots \\
\end{array}%
\right)
 $
we put
$\alpha=
\left(%
\begin{array}{ccccc}
  m_1 & m_2 & m_3 & m_4 & \ldots \\
  l_1 & l_2 & l_3 & l_4 & \ldots \\
\end{array}%
\right)
 $. Then $\alpha\alpha^{-1}=\varepsilon$
 and
$\alpha^{-1}\alpha=\iota$, and hence $(iv)$ holds. Also, $(v)$
follows from $(ii)$. All other assertions are trivial.
\end{proof}

\begin{proposition}\label{proposition-2.1a}
For every
$\alpha,\beta\in\mathscr{I}_{\infty}^{\!\nearrow}(\mathbb{N}),$
both sets
 $
\{\chi\in\mathscr{I}_{\infty}^{\!\nearrow}(\mathbb{N})\mid
\alpha\cdot\chi=\beta\}
 $
 and
 $
\{\chi\in\mathscr{I}_{\infty}^{\!\nearrow}(\mathbb{N})\mid
\chi\cdot\alpha=\beta\}
 $
are finite. Consequently, every right translation and every left
translation by an element of the semigroup
$\mathscr{I}_{\infty}^{\!\nearrow}(\mathbb{N})$ is a finite-to-one
map.
\end{proposition}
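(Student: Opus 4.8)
The plan is to show that in both cases any solution $\chi$ is completely prescribed on a cofinite subset of $\mathbb{N}$, and then to observe that a monotone injective partial map with cofinite domain and image admits only finitely many extensions of a fixed map given on a cofinite set.

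First I would treat the equation $\alpha\chi=\beta$. Since $\operatorname{dom}(\alpha\chi)\subseteq\operatorname{dom}\alpha$ and $\operatorname{dom}(\alpha\chi)=\operatorname{dom}\beta$, we get $\operatorname{dom}\beta\subseteq\operatorname{dom}\alpha$ and, for every $y\in\operatorname{dom}\beta$, that $(y)\alpha\in\operatorname{dom}\chi$ with $((y)\alpha)\chi=(y)\beta$. Thus $\chi$ is forced on $A:=(\operatorname{dom}\beta)\alpha$. As $\operatorname{dom}\beta$ is cofinite and contained in $\operatorname{dom}\alpha$, the set $\operatorname{dom}\alpha\setminus\operatorname{dom}\beta$ is finite, so $A=\operatorname{rank}\alpha\setminus(\operatorname{dom}\alpha\setminus\operatorname{dom}\beta)\alpha$ is cofinite in $\mathbb{N}$; hence every solution coincides on the cofinite set $A$ with one fixed partial map.

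For $\chi\alpha=\beta$ I would use that $\alpha^{-1}\in\mathscr{I}_{\infty}^{\!\nearrow}(\mathbb{N})$. From $\operatorname{dom}(\chi\alpha)=\operatorname{dom}\beta$ one obtains that for each $y\in\operatorname{dom}\beta$ the value $(y)\chi$ lies in $\operatorname{dom}\alpha$ and $((y)\chi)\alpha=(y)\beta$, so $(y)\chi=(y)(\beta\alpha^{-1})$; in particular $\operatorname{rank}\beta\subseteq\operatorname{rank}\alpha$, whence $\operatorname{dom}(\beta\alpha^{-1})=\operatorname{dom}\beta$ is cofinite. So again every solution coincides on the cofinite set $\operatorname{dom}\beta$ with one fixed map, namely $\beta\alpha^{-1}$.

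It then remains to check the extension claim: if $\chi\in\mathscr{I}_{\infty}^{\!\nearrow}(\mathbb{N})$ agrees on a cofinite set $C$ with a fixed map $\psi$, then there are only finitely many such $\chi$. Indeed, such a $\chi$ is determined by which points of the finite set $\mathbb{N}\setminus C$ belong to $\operatorname{dom}\chi$ and by the values it assigns to them; moreover, since $\operatorname{dom}\chi$ is cofinite, all points of $\operatorname{dom}\chi$ lying outside $C$ sit in a bounded initial segment of $\operatorname{dom}\chi$, so by strict monotonicity their $\chi$-values are bounded above by a value prescribed by $\psi$, leaving only finitely many possibilities. Hence the solution set of each equation is finite. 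Finally, the two sets in the statement are precisely the preimages of $\beta$ under the left translation $\chi\mapsto\alpha\chi$ and the right translation $\chi\mapsto\chi\alpha$, so these translations are finite-to-one. The only mildly delicate point is this last extension step — the bookkeeping that cofiniteness of domain and image is preserved under the relevant compositions and forces the ``free'' part of $\chi$ into a bounded region where monotonicity admits only finitely many choices.
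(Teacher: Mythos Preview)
Your argument is correct and follows essentially the same line as the paper's: show that any solution $\chi$ is prescribed on a cofinite subset of $\mathbb{N}$, and then use monotonicity to see that only finitely many extensions in $\mathscr{I}_{\infty}^{\!\nearrow}(\mathbb{N})$ are possible. The paper packages the first step slightly more economically by multiplying $\alpha\chi=\beta$ on the left by $\alpha^{-1}$, so that the forcing set is simply $\operatorname{dom}(\alpha^{-1}\alpha)=\operatorname{rank}\alpha$ and the forced map is $\alpha^{-1}\beta$, but this is the same idea as your direct computation of $A=(\operatorname{dom}\beta)\alpha$.
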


\begin{proof}
We denote
$A=\{\chi\in\mathscr{I}_{\infty}^{\!\nearrow}(\mathbb{N})\mid
\alpha\cdot\chi=\beta\}$ and
$B=\{\chi\in\mathscr{I}_{\infty}^{\!\nearrow}(\mathbb{N})\mid
\alpha^{-1}\cdot\alpha\cdot\chi=\alpha^{-1}\cdot\beta\}$. Then
$A\subseteq B$ and the restriction of any partial map $\chi\in B$
to $\operatorname{dom}(\alpha^{-1}\cdot\alpha)$ coincides with the
partial map $\alpha^{-1}\cdot\beta$. Since every partial map from
$\mathscr{I}_{\infty}^{\!\nearrow}(\mathbb{N})$ is monotone and
non-decreasing, the set $B$ is finite and hence so is $A$.
\end{proof}

For every $\gamma\in\mathscr{I}_{\infty}^{\!\nearrow}(\mathbb{N})$
 $
M_{\operatorname{dom}}(\gamma)=  \min\{n\in\mathbb{N}\mid
m\in\operatorname{dom}\gamma \mbox{ for all } m\geqslant n\}
 $
 and
 $
M_{\operatorname{ran}}(\gamma)=  \min\{n\in\mathbb{N}\mid
m\in\operatorname{ran}\gamma \mbox{ for all } m\geqslant n\}
 $
and put
 $
    M(\gamma)=\max\{M_{\operatorname{dom}}(\gamma),
    M_{\operatorname{ran}}(\gamma)\}.
 $

\begin{lemma}\label{lemma-2.2}
For every idempotent $\varepsilon$ of the semigroup
$\mathscr{I}_{\infty}^{\!\nearrow}(\mathbb{N})$ there exists an
idempotent $\varepsilon_0\in
E(\mathscr{I}_{\infty}^{\!\nearrow}(\mathbb{N})) \setminus
E(\mathscr{C}_{\mathbb{N}}(\alpha,\beta))$ such that the following
conditions hold:
\begin{enumerate}
    \item $\varepsilon_0\leqslant\varepsilon$;
    \item $\varepsilon_0$ is the unity of a subsemigroup
     $\mathscr{C}$ of
     $\mathscr{I}_{\infty}^{\!\nearrow}(\mathbb{N})$, which
     is isomorphic to the bicyclic semigroup;  \; and
    \item $\mathscr{C}\cap
     \mathscr{C}_{\mathbb{N}}(\alpha,\beta)=\varnothing$.
\end{enumerate}
\end{lemma}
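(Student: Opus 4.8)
The plan is to construct $\varepsilon_0$ explicitly as a restriction of the identity to a suitable cofinite set, and then build an explicit copy of the bicyclic semigroup sitting above it. First I would describe an arbitrary idempotent $\varepsilon$ by its domain: by Proposition~\ref{proposition-2.1}$(vii)$ we may write $\operatorname{dom}\varepsilon = \mathbb{N}\setminus F$ for a finite set $F\subseteq\mathbb{N}$. The key point about $E(\mathscr{C}_{\mathbb{N}}(\alpha,\beta))$ is that each of its idempotents has domain of the form $\{n, n+1, n+2, \ldots\}$ — a final segment of $\mathbb{N}$ — whereas we want $\varepsilon_0$ to have a domain whose complement is \emph{not} a final segment, so that $\varepsilon_0\notin E(\mathscr{C}_{\mathbb{N}}(\alpha,\beta))$. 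So I would take a natural number $N$ larger than every element of $F$ and larger than $1$, remove from $\operatorname{dom}\varepsilon$ just one more point, say the point $1$ if $1\notin F$ (or otherwise keep $F$ and additionally delete some point below $N$ that lies in $\operatorname{dom}\varepsilon$), and set $\operatorname{dom}\varepsilon_0 = \mathbb{N}\setminus(F\cup\{k\})$ for an appropriate $k<N$ with $k\notin F$; then $\varepsilon_0\leqslant\varepsilon$ by Proposition~\ref{proposition-2.1}$(vi)$, and the complement of $\operatorname{dom}\varepsilon_0$ contains a "hole" so $\varepsilon_0$ is genuinely outside $E(\mathscr{C}_{\mathbb{N}}(\alpha,\beta))$.

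Next I would exhibit the bicyclic subsemigroup $\mathscr{C}$ with unit $\varepsilon_0$. Write $D=\operatorname{dom}\varepsilon_0=\{d_1<d_2<d_3<\cdots\}$, an infinite subset of $\mathbb{N}$ whose complement is finite. Define $\widehat\alpha\in\mathscr{I}_{\infty}^{\!\nearrow}(\mathbb{N})$ to be the "shift along $D$" map sending $d_i\mapsto d_{i+1}$ for all $i$, suitably extended (or rather: $\operatorname{dom}\widehat\alpha$ must be cofinite, so one extends the definition of $\widehat\alpha$ on the finitely many points outside $D$ in any order-preserving injective way with cofinite image — for instance map those finitely many points bijectively onto $\{d_1,\dots\}$-complement appropriately, or simply send $\mathbb{N}\setminus D$ identically to itself and $D$ to $D$ shifted — one checks monotonicity). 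Then set $\widehat\beta=\widehat\alpha^{-1}$. One verifies $\widehat\alpha\widehat\alpha^{-1}=\operatorname{id}$ on a cofinite set and $\widehat\alpha^{-1}\widehat\alpha = \varepsilon_0$ (the missing point in the image of $d_i\mapsto d_{i+1}$ restricted to $D$ is $d_1$). Taking $\mathscr{C}$ to be the subsemigroup generated by $\widehat\alpha,\widehat\beta$, the relation $\widehat\alpha\,\widehat\beta\cdots$ gives exactly the bicyclic presentation with identity $\varepsilon_0$ (using \cite[Corollary~1.32]{CP} or the standard description of $\mathscr{C}(p,q)$ inside $\mathscr{I}_\omega$).

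Finally, condition $(3)$, $\mathscr{C}\cap\mathscr{C}_{\mathbb{N}}(\alpha,\beta)=\varnothing$: every nonidentity element of $\mathscr{C}$ is of the form $\widehat\beta^{\,m}\widehat\alpha^{\,n}$ with $(m,n)\ne(0,0)$, and its domain or image is a subset of a set obtained from $D$, hence its "stable part" misses the point $d_1$ (or a translate of it), so it is not a final segment of $\mathbb{N}$; in particular such an element cannot equal any power of $\alpha$ or $\beta$ or any $\beta^m\alpha^n$, whose domains and ranges are all final segments $\{j,j+1,\ldots\}$. The one element to watch is the identity: here $\varepsilon_0\ne\operatorname{id}_{\mathbb{N}}$ (since we deleted the point $k$) and $\varepsilon_0$ is, as arranged, not in $E(\mathscr{C}_{\mathbb{N}}(\alpha,\beta))$, so $\varepsilon_0\notin\mathscr{C}_{\mathbb{N}}(\alpha,\beta)$. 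I expect the main obstacle to be purely bookkeeping: choosing the extension of $\widehat\alpha$ to the finitely many points outside $D$ so that it is simultaneously monotone, injective, and has cofinite image, and then checking cleanly that no word in $\widehat\alpha,\widehat\beta$ accidentally produces an element of $\mathscr{C}_{\mathbb{N}}(\alpha,\beta)$ — both are routine but require care with the finitely many "exceptional" points, which is why isolating the invariant "domain/range is not a final segment" is the right organizing idea.
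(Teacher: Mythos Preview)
Your overall idea---pick $\varepsilon_0\leqslant\varepsilon$ whose domain is not a final segment, then build a bicyclic copy on $D=\operatorname{dom}\varepsilon_0$---is the same as the paper's, but the execution has a genuine gap in verifying condition~(3).

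Concretely: with $D=\{d_1<d_2<\cdots\}$ and $\widehat\alpha\colon d_i\mapsto d_{i+1}$ (no extension is needed, by the way---$D$ is already cofinite), the element $\widehat\beta^{\,m}\widehat\alpha^{\,n}$ has domain $\{d_{m+1},d_{m+2},\ldots\}$ and range $\{d_{n+1},d_{n+2},\ldots\}$. Once $m$ and $n$ are large enough to pass the finitely many ``holes'' of $D$, these \emph{are} final segments of $\mathbb{N}$, and the map itself is a pure shift on a tail---i.e., an element of $\mathscr{C}_{\mathbb{N}}(\alpha,\beta)$. For instance, if $\varepsilon=\mathrm{id}$ and you delete the point $2$ so that $D=\{1,3,4,5,\ldots\}$, then $\widehat\beta\widehat\alpha$ is the identity on $\{3,4,5,\ldots\}$, which equals $\beta^2\alpha^2\in\mathscr{C}_{\mathbb{N}}(\alpha,\beta)$. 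So $\mathscr{C}\cap\mathscr{C}_{\mathbb{N}}(\alpha,\beta)\neq\varnothing$ and your argument for (3) (``its stable part misses the point $d_1$'') is simply false for $m,n\geqslant 1$.

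The paper fixes exactly this by a sharper choice: it takes $n_0=M(\varepsilon)+1$, sets $\operatorname{dom}\varepsilon_0=\{n_0-1\}\cup\{n_0+1,n_0+2,\ldots\}$, and---crucially---lets both generators $\widetilde\alpha,\widetilde\beta$ \emph{fix} the isolated point $n_0-1$ while shifting only the tail $\{n_0+1,n_0+2,\ldots\}$. Then every word in $\widetilde\alpha,\widetilde\beta$ has $n_0-1$ in its domain and range but never $n_0$, so no such word has domain or range equal to a final segment, and (3) follows immediately. Your construction can be repaired along the same line: choose $D$ of the specific form ``one isolated point below a gap, then a tail'', and make the generators fix that isolated point. (A side remark: your suggested deletion of the point $1$ when $F=\varnothing$ gives $D=\{2,3,\ldots\}$, a final segment, so $\varepsilon_0$ would land back in $E(\mathscr{C}_{\mathbb{N}}(\alpha,\beta))$; be careful there too.)
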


\begin{proof}
Let $\varepsilon$ be an arbitrary idempotent of the semigroup
$\mathscr{I}_{\infty}^{\!\nearrow}(\mathbb{N})$. We put
$n_0=M(\varepsilon)+1$ and
$$
 \varepsilon_0=
\left(%
\begin{array}{ccccc}
  n_0{-}1 & n_0{+}1 & n_0{+}2 & n_0{+}3 & \cdots \\
  n_0{-}1 & n_0{+}1 & n_0{+}2 & n_0{+}3 & \cdots \\
\end{array}%
\right). $$
We define the partial monotone bijections
$\widetilde{\alpha}\colon\mathbb{N}\rightharpoonup\mathbb{N}$ and
$\widetilde{\beta}\colon\mathbb{N}\rightharpoonup\mathbb{N}$ as
follows:
\begin{equation*}
    \widetilde{\alpha}=
\left(%
\begin{array}{ccccc}
  n_0{-}1 & n_0{+}1 & n_0{+}2 & n_0{+}3 & \cdots \\
  n_0{-}1 & n_0{+}2 & n_0{+}3 & n_0{+}4 & \cdots \\
\end{array}%
\right) \qquad \hbox{ and } \qquad \widetilde{\beta}=
\left(%
\begin{array}{ccccc}
  n_0{-}1 & n_0{+}2 & n_0{+}3 & n_0{+}4 & \cdots \\
  n_0{-}1 & n_0{+}1 & n_0{+}2 & n_0{+}3 & \cdots \\
\end{array}%
\right).
\end{equation*}
Let $\mathscr{C}$ a semigroup generated by
the elements $\widetilde{\alpha}$ and $\widetilde{\beta}$. Then
$\mathscr{C}$ satisfies the conditions (2)---(3) of the lemma and
$\varepsilon_0=\widetilde{\alpha}\cdot\widetilde{\beta}$ is the
identity of the semigroup $\mathscr{C}$ such that
$\varepsilon_0\leqslant\varepsilon$.
\end{proof}

\begin{lemma}\label{lemma-2.4}
For every
$\lambda\in\mathscr{I}_{\infty}^{\!\nearrow}(\mathbb{N})$ there
exist $\mu\in\mathscr{C}_{\mathbb{N}}(\alpha,\beta)$ and
$\varepsilon\in E(\mathscr{C}_{\mathbb{N}}(\alpha,\beta))$ such
that $\lambda\cdot\varepsilon=\mu\cdot\varepsilon$ and
$\varepsilon\cdot\lambda=\varepsilon\cdot\mu$.
\end{lemma}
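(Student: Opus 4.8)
The plan is to fix $\lambda\in\mathscr{I}_{\infty}^{\!\nearrow}(\mathbb{N})$ and produce a single idempotent $\varepsilon\in E(\mathscr{C}_{\mathbb{N}}(\alpha,\beta))$ far enough ``out'' that $\lambda$ behaves, when cut down by $\varepsilon$ on either side, like an element of the bicyclic copy $\mathscr{C}_{\mathbb{N}}(\alpha,\beta)$. Concretely, recall that the idempotents of $\mathscr{C}_{\mathbb{N}}(\alpha,\beta)$ are exactly the partial identities $\varepsilon_k$ on $\{k,k{+}1,k{+}2,\dots\}$ for $k\in\mathbb{N}$ (this is the band $E(\mathscr{C}_{\mathbb{N}}(\alpha,\beta))$ mentioned in Lemma~\ref{lemma-2.2}), and that a generic element of $\mathscr{C}_{\mathbb{N}}(\alpha,\beta)$ is a ``shift'' map: for some $s,t\in\mathbb{N}$ it sends $n\mapsto n+(t-s)$ for all $n\geqslant s$, with domain $\{s,s{+}1,\dots\}$ and range $\{t,t{+}1,\dots\}$. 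First I would choose $N=M(\lambda)$ as defined just before the lemma, so that $\lambda$ is defined on all of $\{N,N{+}1,\dots\}$ and its range contains all of $\{N,N{+}1,\dots\}$; because $\lambda$ is monotone, non-decreasing and injective, on the tail $\{N,N{+}1,\dots\}$ it must act as $n\mapsto n+c$ for a fixed integer constant $c=(N)\lambda-N$ (the gap is eventually constant since both the complement of the domain and the complement of the range are finite).

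Next I would pick $k$ large enough that $k\geqslant N$ and $k+c\geqslant N$, and set $\varepsilon=\varepsilon_k$, the partial identity on $\{k,k{+}1,\dots\}$, which lies in $E(\mathscr{C}_{\mathbb{N}}(\alpha,\beta))$. I would then define $\mu\in\mathscr{C}_{\mathbb{N}}(\alpha,\beta)$ to be the shift map agreeing with $\lambda$ on this tail: explicitly $\mu=\beta^{\,p}\alpha^{\,q}$ (or $\alpha^{\,q}\beta^{\,p}$, with the exponents chosen from $k$ and $k+c$) so that $\operatorname{dom}\mu\supseteq\{k,k{+}1,\dots\}$ and $(n)\mu=n+c$ there. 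The computation $\lambda\cdot\varepsilon$ restricts $\lambda$ to those $n\in\operatorname{dom}\lambda$ with $(n)\lambda\geqslant k$; since $k\geqslant N$ this subset of the domain lies in the tail $\{N,N{+}1,\dots\}$ where $\lambda$ and $\mu$ coincide, and the same tail condition makes $(n)\mu\geqslant k$ iff $(n)\lambda\geqslant k$, so $\lambda\cdot\varepsilon=\mu\cdot\varepsilon$. Symmetrically, $\varepsilon\cdot\lambda$ restricts $\lambda$ to $\{k,k{+}1,\dots\}\cap\operatorname{dom}\lambda=\{k,k{+}1,\dots\}$ (using $k\geqslant N$), where again $\lambda$ and $\mu$ agree, giving $\varepsilon\cdot\lambda=\varepsilon\cdot\mu$.

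The one genuine point to get right is the claim that $\lambda$ is an \emph{honest translation} on a cofinal tail — i.e. that the difference $(n)\lambda - n$ stabilizes. This follows because $\operatorname{dom}\lambda$ and $\operatorname{rank}\lambda$ are both cofinite and $\lambda$ is order-preserving and bijective between them: past $M(\lambda)$ there are no more ``holes'', so $\lambda$ must biject $\{M(\lambda),M(\lambda){+}1,\dots\}$ onto $\{M(\lambda),M(\lambda){+}1,\dots\}$ in the unique order-preserving way, which is the identity, \emph{unless} the two tails start at different points — but both start at $M(\lambda)$ by definition of $M(\lambda)$, so in fact $c$ need not be zero only because $\operatorname{dom}\lambda$ may extend below $M(\lambda)$ while $\operatorname{rank}\lambda$ does too; the correct statement is that $(n)\lambda-n$ is eventually constant, and one reads off $c$ from any single value $n\geqslant M(\lambda)$. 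I expect the bookkeeping with the exponents of $\alpha$ and $\beta$ defining $\mu$, and the verification that the two one-sided products land on exactly the same restricted domain, to be the only slightly fiddly part; everything else is a direct unwinding of the definitions.
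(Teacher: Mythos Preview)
Your overall strategy is exactly the paper's: pick the tail idempotent $\varepsilon$ equal to the identity on $\{M(\lambda),M(\lambda)+1,\dots\}$, let $\mu$ be the restriction of $\lambda$ to that tail, and assert the two equalities. The paper's argument is only two lines and does not elaborate further.

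There is, however, a genuine gap in the details you give (and, read literally, the paper glosses over the same point). It is \emph{not} true that $\lambda$ is already a pure shift on $\{M(\lambda),M(\lambda)+1,\dots\}$, so $c$ cannot be ``read off from any single value $n\geqslant M(\lambda)$''. Take $\operatorname{dom}\lambda=\{5,6,7,\dots\}$ with $(5)\lambda=1$, $(6)\lambda=3$, $(7)\lambda=4$, $(8)\lambda=5,\dots$; then $M_{\operatorname{dom}}(\lambda)=5$, $M_{\operatorname{ran}}(\lambda)=3$, $M(\lambda)=5$, yet $(5)\lambda-5=-4\neq(6)\lambda-6=-3$, and the restriction of $\lambda$ to $\{5,6,\dots\}$ has range $\{1,3,4,\dots\}$, which is not even an interval and hence not in $\mathscr{C}_{\mathbb{N}}(\alpha,\beta)$. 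Your check of $\lambda\cdot\varepsilon=\mu\cdot\varepsilon$ has a second problem: you claim that $(n)\lambda\geqslant k$ with $k\geqslant M(\lambda)$ forces $n$ into the tail $\{M(\lambda),\dots\}$, but with $\operatorname{dom}\lambda=\mathbb{N}$ and $(1)\lambda=1$, $(2)\lambda=4$, $(3)\lambda=5,\dots$ one has $M(\lambda)=4$ while $(2)\lambda=4\geqslant4$ and $2<4$; here $\operatorname{dom}(\lambda\cdot\varepsilon_4)=\{2,3,4,\dots\}$ strictly contains $\operatorname{dom}(\mu\cdot\varepsilon_4)=\{4,5,\dots\}$ for any $\mu$ with $\operatorname{dom}\mu\subseteq\{4,5,\dots\}$.

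The repair is painless and stays within your plan. You already state the correct fact: $(n)\lambda-n$ is \emph{eventually} constant, say equal to $c$ (because $\lambda$ carries the $i$-th element of $\operatorname{dom}\lambda$ to the $i$-th element of $\operatorname{rank}\lambda$, and both sequences are eventually runs of consecutive integers). Take $\mu\in\mathscr{C}_{\mathbb{N}}(\alpha,\beta)$ to be the maximal shift by $c$ (namely $\alpha^{c}$ if $c\geqslant0$, or $\beta^{-c}$ if $c<0$), and choose $k$ so large that on $\{k,k+1,\dots\}$ both $\lambda$ and $\lambda^{-1}$ agree with $\mu$ and $\mu^{-1}$ respectively. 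Then $\varepsilon_k\cdot\lambda=\varepsilon_k\cdot\mu$ is immediate, and $\lambda\cdot\varepsilon_k=\mu\cdot\varepsilon_k$ follows because the preimages of $\{k,k+1,\dots\}$ under $\lambda$ and under $\mu$ now coincide. The only change from what you wrote is that $k$ must in general be taken strictly larger than $M(\lambda)$.
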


\begin{proof}
The definition of the semigroup
$\mathscr{I}_{\infty}^{\!\nearrow}(\mathbb{N})$ implies that for
every $\gamma\in\mathscr{I}_{\infty}^{\!\nearrow}(\mathbb{N})$ the
notions $M_{\operatorname{dom}}(\gamma)$,
$M_{\operatorname{ran}}(\gamma)$ and $M(\gamma)$ exist and they
are unique, and hence they are well-defined.

We define partial maps
$\mu\colon\mathbb{N}\rightharpoonup\mathbb{N}$ and
$\varepsilon\colon\mathbb{N}\rightharpoonup\mathbb{N}$ as follows:
$\operatorname{dom}\mu=\{ n\in\mathbb{N}\mid n\geqslant
M(\lambda)\}$ and $(i)\mu=(i)\lambda$ for all
$i\in\operatorname{dom}\mu$ and $\operatorname{dom}\varepsilon=\{
n\in\mathbb{N}\mid n\geqslant M(\lambda)\}$ and $(i)\mu=i$ for all
$i\in\operatorname{dom}\mu$. Then we have
$\lambda\cdot\varepsilon=\mu\cdot\varepsilon$ and
$\varepsilon\cdot\lambda=\varepsilon\cdot\mu$.
\end{proof}

The proof of the following lemma is similar to the proof of
Lemma~\ref{lemma-2.4}.

\begin{lemma}\label{lemma-2.4a}
For every idempotent $\varphi\in
E(\mathscr{I}_{\infty}^{\!\nearrow}(\mathbb{N}))$ there exists
$\varepsilon\in E(\mathscr{C}_{\mathbb{N}}(\alpha,\beta))$ such
that $\varphi\cdot\varepsilon\in
E(\mathscr{C}_{\mathbb{N}}(\alpha,\beta))$. More than,
$\psi\cdot\varepsilon\in
E(\mathscr{C}_{\mathbb{N}}(\alpha,\beta))$ for every $\psi\in
E(\mathscr{C}_{\mathbb{N}}(\alpha,\beta))$ such that
$\psi\leqslant\varepsilon$.
\end{lemma}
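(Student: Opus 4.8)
The plan is to imitate the construction from Lemma~\ref{lemma-2.4}, but to be careful about where the "tail" of the idempotent $\varphi$ begins so that the resulting product lands inside $E(\mathscr{C}_{\mathbb{N}}(\alpha,\beta))$. First I would recall that, by Proposition~\ref{proposition-2.1}(vii), an idempotent $\varepsilon$ of $\mathscr{I}_{\infty}^{\!\nearrow}(\mathbb{N})$ is uniquely determined by the finite set $\mathbb{N}\setminus\operatorname{dom}\varepsilon$, and that an idempotent lies in $E(\mathscr{C}_{\mathbb{N}}(\alpha,\beta))$ precisely when its domain is a final segment $\{n\in\mathbb{N}\mid n\geqslant k\}$ of $\mathbb{N}$ (these are exactly the idempotents of the bicyclic semigroup $\mathscr{C}_{\mathbb{N}}(\alpha,\beta)$, namely $\beta^{k-1}\alpha^{k-1}$). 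So the statement to be proved is: for every idempotent $\varphi$ there is a final-segment idempotent $\varepsilon$ with $\varphi\cdot\varepsilon$ again a final-segment idempotent, and moreover $\psi\cdot\varepsilon$ is a final-segment idempotent for every final-segment idempotent $\psi\leqslant\varepsilon$.

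The key observation is that for idempotents $\varphi,\varepsilon$ we have $\operatorname{dom}(\varphi\cdot\varepsilon)=\operatorname{dom}\varphi\cap\operatorname{dom}\varepsilon$, since idempotents of $\mathscr{I}_{\infty}^{\!\nearrow}(\mathbb{N})$ are restrictions of the identity map. Hence I would set $\varepsilon$ to be the final-segment idempotent with $\operatorname{dom}\varepsilon=\{n\in\mathbb{N}\mid n\geqslant M(\varphi)\}$, where $M(\varphi)$ is as defined before Lemma~\ref{lemma-2.2} (this is legitimate: the remark at the start of the proof of Lemma~\ref{lemma-2.4} guarantees $M(\varphi)$ is well-defined). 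Since $\varphi$ is an idempotent, $M_{\operatorname{dom}}(\varphi)=M_{\operatorname{ran}}(\varphi)$ and every integer $\geqslant M(\varphi)$ already lies in $\operatorname{dom}\varphi$; therefore $\operatorname{dom}\varphi\cap\operatorname{dom}\varepsilon=\{n\in\mathbb{N}\mid n\geqslant M(\varphi)\}=\operatorname{dom}\varepsilon$, so $\varphi\cdot\varepsilon=\varepsilon\in E(\mathscr{C}_{\mathbb{N}}(\alpha,\beta))$. For the final clause, if $\psi$ is a final-segment idempotent with $\psi\leqslant\varepsilon$, then by Proposition~\ref{proposition-2.1}(vi) $\operatorname{dom}\psi\subseteq\operatorname{dom}\varepsilon$, and since $\psi$ already has final-segment domain, $\operatorname{dom}(\psi\cdot\varepsilon)=\operatorname{dom}\psi\cap\operatorname{dom}\varepsilon=\operatorname{dom}\psi$, so $\psi\cdot\varepsilon=\psi\in E(\mathscr{C}_{\mathbb{N}}(\alpha,\beta))$.

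I do not expect a serious obstacle here; the only point needing care is the bookkeeping identifying $E(\mathscr{C}_{\mathbb{N}}(\alpha,\beta))$ with the final-segment idempotents of $\mathscr{I}_{\infty}^{\!\nearrow}(\mathbb{N})$ and checking that $\varphi$ being idempotent forces its "defect set" $\mathbb{N}\setminus\operatorname{dom}\varphi$ to lie below $M(\varphi)$, so that intersecting with a sufficiently high final segment kills it. Everything else is the same elementary product-of-restrictions computation used in Lemma~\ref{lemma-2.4}, which is why the paper simply says the proof is similar.
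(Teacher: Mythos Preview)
Your proposal is correct and follows exactly the approach the paper intends: the paper does not give a separate proof for Lemma~\ref{lemma-2.4a}, merely stating that the proof is similar to that of Lemma~\ref{lemma-2.4}, and your construction---taking $\varepsilon$ to be the final-segment idempotent with domain $\{n\in\mathbb{N}\mid n\geqslant M(\varphi)\}$ and using that idempotents multiply by intersecting domains---is precisely that analogue. The identification of $E(\mathscr{C}_{\mathbb{N}}(\alpha,\beta))$ with the final-segment idempotents and the verification of the ``moreover'' clause via $\psi\leqslant\varepsilon\Rightarrow\psi\cdot\varepsilon=\psi$ are both correct.
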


\begin{lemma}\label{lemma-2.4b}
For every idempotent $\varepsilon\in
E(\mathscr{I}_{\infty}^{\!\nearrow}(\mathbb{N}))$ there exists
$\varphi\in E(\mathscr{C}_{\mathbb{N}}(\alpha,\beta))$ such that
$\varphi\leqslant\varepsilon$.
\end{lemma}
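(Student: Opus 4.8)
The plan is to deduce the lemma directly from the order criterion in Proposition~\ref{proposition-2.1}$(vi)$, once the relevant idempotents of the bicyclic subsemigroup have been identified.

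First I would record which idempotents live in $\mathscr{C}_{\mathbb{N}}(\alpha,\beta)$. Since transformations act on the right, $\alpha\beta$ is the identity transformation of $\mathbb{N}$, while for each $k\geqslant 1$ the product $\beta^{k}\alpha^{k}$ is the partial identity on the tail $\{n\in\mathbb{N}\mid n>k\}$; as every element of a bicyclic semigroup has the form $q^{i}p^{j}$ and is idempotent exactly when $i=j$, the band $E(\mathscr{C}_{\mathbb{N}}(\alpha,\beta))$ consists precisely of the partial identities
$$
\varepsilon_k=\left(\begin{array}{cccc} k{+}1 & k{+}2 & k{+}3 & \cdots \\ k{+}1 & k{+}2 & k{+}3 & \cdots \end{array}\right),\qquad k=0,1,2,\ldots,
$$
where $\operatorname{dom}\varepsilon_k=\{n\in\mathbb{N}\mid n>k\}$ and $\varepsilon_0$ is the identity of $\mathscr{I}_{\infty}^{\!\nearrow}(\mathbb{N})$.

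Next, let $\varepsilon$ be an arbitrary idempotent of $\mathscr{I}_{\infty}^{\!\nearrow}(\mathbb{N})$. Being an idempotent of the symmetric inverse semigroup $\mathscr{I}_{\omega}$, it is the partial identity on its domain, and $\mathbb{N}\setminus\operatorname{dom}\varepsilon$ is finite by the definition of $\mathscr{I}_{\infty}^{\!\nearrow}(\mathbb{N})$. Put $n_0=M(\varepsilon)$; since $\operatorname{dom}\varepsilon=\operatorname{ran}\varepsilon$ we have $n_0=M_{\operatorname{dom}}(\varepsilon)$, so $\{n\in\mathbb{N}\mid n\geqslant n_0\}\subseteq\operatorname{dom}\varepsilon$ by the definition of $M_{\operatorname{dom}}$. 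Taking $\varphi=\varepsilon_{n_0-1}\in E(\mathscr{C}_{\mathbb{N}}(\alpha,\beta))$, we get $\operatorname{dom}\varphi=\{n\in\mathbb{N}\mid n\geqslant n_0\}\subseteq\operatorname{dom}\varepsilon$, and Proposition~\ref{proposition-2.1}$(vi)$ then yields $\varphi\leqslant\varepsilon$, as required.

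The statement is essentially immediate, so I do not expect a genuine obstacle; the only points that need a line of verification are checking that $\beta^{k}\alpha^{k}$ is indeed the partial identity on $\{n\in\mathbb{N}\mid n>k\}$ (a one-line computation with the right-action convention) and the boundary case $\operatorname{dom}\varepsilon=\mathbb{N}$, where $n_0=1$ and $\varphi$ is just the identity element $\alpha\beta$ of the bicyclic subsemigroup.
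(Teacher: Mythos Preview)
Your proof is correct and follows essentially the same route as the paper: both pick the threshold $n_0$ beyond which every integer lies in $\operatorname{dom}\varepsilon$ and take $\varphi$ to be the partial identity on $\{n\geqslant n_0\}$, which is an idempotent of $\mathscr{C}_{\mathbb{N}}(\alpha,\beta)$. You phrase this via $M(\varepsilon)$ and invoke Proposition~\ref{proposition-2.1}$(vi)$ explicitly (and also treat the boundary case $\operatorname{dom}\varepsilon=\mathbb{N}$), whereas the paper defines the same threshold directly as the maximal $n_\varepsilon$ with $n_\varepsilon-1\notin\operatorname{dom}\varepsilon$; the arguments are otherwise identical.
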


\begin{proof}
The definition of $\mathscr{I}_{\infty}^{\!\nearrow}(\mathbb{N})$
implies that there exists a maximal positive integer
$n_{\varepsilon}$ such that
$n_{\varepsilon}-1\notin\operatorname{dom}\varepsilon$. We put
 $
    \varphi=\left(%
\begin{array}{ccccc}
  n_\varepsilon & n_\varepsilon{+}1 & n_\varepsilon{+}2 & n_\varepsilon{+}3 & \cdots \\
  n_\varepsilon & n_\varepsilon{+}1 & n_\varepsilon{+}2 & n_\varepsilon{+}3 & \cdots \\
\end{array}%
\right)
 $
Then we get $\varphi\in E(\mathscr{C}_{\mathbb{N}}(\alpha,\beta))$
and $\varphi\leqslant\varepsilon$.
\end{proof}

\begin{lemma}\label{lemma-2.4c}
For every element $\lambda\in
\mathscr{I}_{\infty}^{\!\nearrow}(\mathbb{N})$ there exists an
idempotent $\varepsilon$ of the subsemigroup
$\mathscr{C}_{\mathbb{N}}(\alpha,\beta)$ such that
$\lambda\cdot\varepsilon\cdot\lambda^{-1},
\lambda^{-1}\cdot\varepsilon\cdot\lambda\in
E(\mathscr{C}_{\mathbb{N}}(\alpha,\beta))$.
\end{lemma}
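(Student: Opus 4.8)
The plan is to exhibit, for a given $\lambda\in\mathscr{I}_{\infty}^{\!\nearrow}(\mathbb{N})$, a single idempotent $\varepsilon\in E(\mathscr{C}_{\mathbb{N}}(\alpha,\beta))$ which works simultaneously for both conjugates $\lambda\varepsilon\lambda^{-1}$ and $\lambda^{-1}\varepsilon\lambda$. Recall from the notation preceding Lemma~\ref{lemma-2.4} that $M(\lambda)=\max\{M_{\operatorname{dom}}(\lambda),M_{\operatorname{ran}}(\lambda)\}$, so every integer $n\geqslant M(\lambda)$ lies in both $\operatorname{dom}\lambda$ and $\operatorname{ran}\lambda$. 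The natural candidate is
\begin{equation*}
\varepsilon=\left(
\begin{array}{cccc}
 M(\lambda) & M(\lambda){+}1 & M(\lambda){+}2 & \cdots \\
 M(\lambda) & M(\lambda){+}1 & M(\lambda){+}2 & \cdots \\
\end{array}
\right),
\end{equation*}
which is an idempotent of $\mathscr{C}_{\mathbb{N}}(\alpha,\beta)$ whose domain is the cofinite set $\{n\in\mathbb{N}\mid n\geqslant M(\lambda)\}$; indeed by Proposition~\ref{proposition-2.1}$(vii)$--$(viii)$ every such upper-set idempotent sits in the $\omega$-chain generated by $\alpha\beta$, hence in $E(\mathscr{C}_{\mathbb{N}}(\alpha,\beta))$.

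Next I would compute the two conjugates directly. Since $\operatorname{dom}\varepsilon\subseteq\operatorname{ran}\lambda$, the product $\lambda\varepsilon\lambda^{-1}$ is the identity map restricted to $(\operatorname{dom}\varepsilon)\lambda^{-1}$, i.e.\ an idempotent of $\mathscr{I}_{\infty}^{\!\nearrow}(\mathbb{N})$ whose domain is $\{(n)\lambda^{-1}\mid n\geqslant M(\lambda)\}$. Because $\lambda$ is monotone and non-decreasing and $\{n\mid n\geqslant M(\lambda)\}\subseteq\operatorname{ran}\lambda$ is a cofinal upper set, its preimage under $\lambda^{-1}$ is again a cofinal upper set of $\mathbb{N}$ — that is, it is of the form $\{n\in\mathbb{N}\mid n\geqslant k\}$ for some $k$ (here one uses that a monotone injection carries an "interval to infinity" onto an "interval to infinity"). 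Hence $\lambda\varepsilon\lambda^{-1}$ is an upper-set idempotent and therefore lies in $E(\mathscr{C}_{\mathbb{N}}(\alpha,\beta))$ by Lemma~\ref{lemma-2.4b} or by the description in Proposition~\ref{proposition-2.1}$(vii)$. The symmetric argument, using $\operatorname{dom}\varepsilon\subseteq\operatorname{dom}\lambda$ and monotonicity of $\lambda$, shows $\lambda^{-1}\varepsilon\lambda$ is likewise an upper-set idempotent in $E(\mathscr{C}_{\mathbb{N}}(\alpha,\beta))$.

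The one point that needs genuine care — the main obstacle, such as it is — is the claim that the monotone injective image (and preimage) of a cofinal upper set of $\mathbb{N}$ is again a cofinal upper set, rather than merely a cofinite set with possible "gaps". This is where the hypothesis that maps in $\mathscr{I}_{\infty}^{\!\nearrow}(\mathbb{N})$ are \emph{monotone} (not merely cofinite partial bijections) is essential: if $n\geqslant M(\lambda)$ then every $m\geqslant n$ also satisfies $m\geqslant M(\lambda)$, so $m\in\operatorname{dom}\lambda$, and monotonicity gives $(m)\lambda\geqslant(n)\lambda$; running this in reverse through $\lambda^{-1}$ shows the preimage has no gaps above its minimum. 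Once this elementary monotonicity lemma is recorded, both conjugates are manifestly of the form treated in Lemma~\ref{lemma-2.4b}, and the proof closes with a single line invoking that lemma (or Proposition~\ref{proposition-2.1}$(vii)$) for each of the two idempotents. I would present the argument in exactly this order: define $\varepsilon$ via $M(\lambda)$; observe $\varepsilon\in E(\mathscr{C}_{\mathbb{N}}(\alpha,\beta))$; compute $\operatorname{dom}(\lambda\varepsilon\lambda^{-1})$ and $\operatorname{dom}(\lambda^{-1}\varepsilon\lambda)$; invoke monotonicity to see each is an upper set; conclude membership in $E(\mathscr{C}_{\mathbb{N}}(\alpha,\beta))$.
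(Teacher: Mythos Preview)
Your argument has a genuine error at the ``monotonicity lemma'': the preimage $\{(n)\lambda^{-1}\mid n\geqslant M(\lambda)\}$ need \emph{not} be an upper set of $\mathbb{N}$, because it may contain points of $\operatorname{dom}\lambda$ lying strictly below $M_{\operatorname{dom}}(\lambda)$, where $\operatorname{dom}\lambda$ can have gaps. Concretely, take
\[
\lambda=\left(\begin{array}{ccccc} 1 & 3 & 4 & 5 & \cdots\\ 5 & 6 & 7 & 8 & \cdots\end{array}\right),
\]
so that $M_{\operatorname{dom}}(\lambda)=3$, $M_{\operatorname{ran}}(\lambda)=5$, and $M(\lambda)=5$. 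Your $\varepsilon$ is then the identity on $\{5,6,7,\ldots\}=\operatorname{ran}\lambda$, whence $\lambda\varepsilon\lambda^{-1}=\lambda\lambda^{-1}$ is the identity on $\operatorname{dom}\lambda=\{1,3,4,5,\ldots\}$, which is \emph{not} an element of $E(\mathscr{C}_{\mathbb{N}}(\alpha,\beta))$. The slogan ``a monotone injection carries an interval-to-infinity onto an interval-to-infinity'' is true for images of tails contained in the gap-free part of the domain, but fails for preimages once they dip below $M_{\operatorname{dom}}(\lambda)$.

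Your approach is easily repaired: shrink $\varepsilon$ so that both preimages land in the gap-free tail. Taking $\varepsilon$ to be the identity on $\{n\mid n\geqslant N\}$ with $N\geqslant\max\{M(\lambda),\,(M(\lambda))\lambda,\,(M(\lambda))\lambda^{-1}\}$ forces $(N)\lambda^{-1}\geqslant M(\lambda)$ and $(N)\lambda\geqslant M(\lambda)$, so that both $\operatorname{dom}(\lambda\varepsilon\lambda^{-1})$ and $\operatorname{dom}(\lambda^{-1}\varepsilon\lambda)$ are genuine intervals $\{k,k{+}1,\ldots\}$, and your argument then goes through. For comparison, the paper avoids this direct computation: it invokes Lemma~\ref{lemma-2.4} to replace $\lambda$ by some $\mu\in\mathscr{C}_{\mathbb{N}}(\alpha,\beta)$ satisfying $\lambda\varepsilon=\mu\varepsilon$ and $\varepsilon\lambda=\varepsilon\mu$, and then observes algebraically that $\lambda\varepsilon\lambda^{-1}=(\lambda\varepsilon)(\lambda\varepsilon)^{-1}=(\mu\varepsilon)(\mu\varepsilon)^{-1}=\mu\varepsilon\mu^{-1}\in E(\mathscr{C}_{\mathbb{N}}(\alpha,\beta))$, with no domain bookkeeping required.
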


\begin{proof}
By Lemma~\ref{lemma-2.4} there exists
$\mu\in\mathscr{C}_{\mathbb{N}}(\alpha,\beta)$ and $\varepsilon\in
E(\mathscr{C}_{\mathbb{N}}(\alpha,\beta))$ such that
$\lambda\cdot\varepsilon=\mu\cdot\varepsilon$ and
$\varepsilon\cdot\lambda=\varepsilon\cdot\mu$. Therefore, we have
that
\begin{equation*}
    \lambda\cdot\varepsilon\cdot\lambda^{-1} = \,
    \lambda\cdot\varepsilon\cdot\varepsilon\cdot\lambda^{-1}=
    (\lambda\cdot\varepsilon)\cdot(\lambda\cdot\varepsilon)^{-1}=
    (\mu\cdot\varepsilon)\cdot(\mu\cdot\varepsilon)^{-1}=
    \mu\cdot\varepsilon\cdot\varepsilon\cdot\mu^{-1}=
    \mu\cdot\varepsilon\cdot\mu^{-1}\in
    E(\mathscr{C}_{\mathbb{N}}(\alpha,\beta))
\end{equation*}
and similarly
$
    \lambda^{-1}\cdot\varepsilon\cdot\lambda=\mu^{-1}\cdot\varepsilon\cdot\mu\in
    E(\mathscr{C}_{\mathbb{N}}(\alpha,\beta)).
$
\end{proof}

\begin{lemma}\label{lemma-2.3}
Let $S$ be a semigroup and
$h\colon\mathscr{I}_{\infty}^{\!\nearrow}(\mathbb{N})\rightarrow
S$  a homomorphism such that $(\varepsilon)h=(\varphi)h$ for some
distinct idempotents $\varepsilon,\varphi\in
E(\mathscr{I}_{\infty}^{\!\nearrow}(\mathbb{N}))$. Then
$(\varepsilon)h=(\psi)h,$ for every $\psi\in
E(\mathscr{I}_{\infty}^{\!\nearrow}(\mathbb{N}))$.
\end{lemma}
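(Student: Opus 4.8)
The plan is to reduce the statement to a fact about idempotents of the bicyclic semigroup, using Lemmas~\ref{lemma-2.4a} and~\ref{lemma-2.4b}, together with the classical fact that any homomorphism of the bicyclic semigroup which identifies two distinct idempotents collapses all idempotents (indeed all of $\mathscr{C}(p,q)$) to a single point (see \cite[Corollary~1.32]{CP}).

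First I would use the hypothesis. We are given distinct idempotents $\varepsilon,\varphi\in E(\mathscr{I}_{\infty}^{\!\nearrow}(\mathbb{N}))$ with $(\varepsilon)h=(\varphi)h$. By Proposition~\ref{proposition-2.1}$(vi)$--$(vii)$ the semilattice $E(\mathscr{I}_{\infty}^{\!\nearrow}(\mathbb{N}))$ is isomorphic to $(\mathscr{P}_{<\omega}(\mathbb{N}),\subseteq)$; in particular it is directed downward, so there is an idempotent $\delta\leqslant\varepsilon$ and $\delta\leqslant\varphi$ (take $\delta$ corresponding to $(\mathbb{N}\setminus\operatorname{dom}\varepsilon)\cup(\mathbb{N}\setminus\operatorname{dom}\varphi)$). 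Multiplying $(\varepsilon)h=(\varphi)h$ on the appropriate side by $(\delta)h$ and using that $h$ is a homomorphism, I can instead produce two distinct idempotents $\varepsilon_1\leqslant\varepsilon_2$ with the same $h$-image — actually it is cleaner to observe that from $(\varepsilon)h=(\varphi)h$ we get $(\varepsilon)h=(\varepsilon\varphi)h=(\varphi)h$, and since $\varepsilon\neq\varphi$ at least one of $\varepsilon\varphi<\varepsilon$, $\varepsilon\varphi<\varphi$ is strict; so without loss of generality we have distinct comparable idempotents $\varepsilon'<\varepsilon$ with $(\varepsilon')h=(\varepsilon)h$.

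Next I would pass into the bicyclic subsemigroup. By Lemma~\ref{lemma-2.4b} there is $\varphi_0\in E(\mathscr{C}_{\mathbb{N}}(\alpha,\beta))$ with $\varphi_0\leqslant\varepsilon'$, and by Lemma~\ref{lemma-2.4a} I can moreover arrange that $\varphi_0\cdot\varepsilon'$ and $\varphi_0\cdot\varepsilon$ lie in $E(\mathscr{C}_{\mathbb{N}}(\alpha,\beta))$; since $\varphi_0\leqslant\varepsilon'\leqslant\varepsilon$ these products are just $\varphi_0$, so that is automatic once $\varphi_0$ is below $\varepsilon'$. The point is that I want two \emph{distinct} idempotents of $\mathscr{C}_{\mathbb{N}}(\alpha,\beta)$ identified by $h$. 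To get this, pick $\varphi_1\in E(\mathscr{C}_{\mathbb{N}}(\alpha,\beta))$ with $\varphi_1\leqslant\varepsilon'$, and inside $\mathscr{C}_{\mathbb{N}}(\alpha,\beta)$ pick $\varphi_2<\varphi_1$ with $\varphi_2\leqslant\varepsilon'$ as well (the idempotents of the bicyclic semigroup form an $\omega$-chain, so below any one of them there is a strictly smaller one, and all of these can be taken below the fixed finite-domain restriction defining $\varepsilon'$). Then $(\varphi_1)h=(\varphi_1\varepsilon')h=(\varphi_1)(\varepsilon')h=(\varphi_1)(\varepsilon)h$ and similarly $(\varphi_2)h=(\varphi_2)(\varepsilon)h$; using $(\varepsilon')h=(\varepsilon)h$ one derives $(\varphi_1)h=(\varphi_1\varepsilon)h=(\varphi_1\varepsilon')h$ and, combining with $\varphi_1,\varphi_2\leqslant\varepsilon'$, that $(\varphi_1)h=(\varphi_2)h$ while $\varphi_1\neq\varphi_2$. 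Now $h$ restricted to $\mathscr{C}_{\mathbb{N}}(\alpha,\beta)\cong\mathscr{C}(p,q)$ identifies two distinct idempotents, hence by \cite[Corollary~1.32]{CP} it is constant on $E(\mathscr{C}_{\mathbb{N}}(\alpha,\beta))$; call that common value $s$.

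Finally I would spread the collapse back to all of $E(\mathscr{I}_{\infty}^{\!\nearrow}(\mathbb{N}))$. Take an arbitrary $\psi\in E(\mathscr{I}_{\infty}^{\!\nearrow}(\mathbb{N}))$. By Lemma~\ref{lemma-2.4a} (or directly Lemma~\ref{lemma-2.4b} applied to a common lower bound) there is $\eta\in E(\mathscr{C}_{\mathbb{N}}(\alpha,\beta))$ with $\psi\cdot\eta\in E(\mathscr{C}_{\mathbb{N}}(\alpha,\beta))$; then $(\psi\cdot\eta)h=s$. Also by Lemma~\ref{lemma-2.4a} we may take $\eta'\leqslant\eta$ in $E(\mathscr{C}_{\mathbb{N}}(\alpha,\beta))$; then $\psi\cdot\eta'=(\psi\cdot\eta)\cdot\eta'$ is an idempotent of $\mathscr{C}_{\mathbb{N}}(\alpha,\beta)$ with $(\psi\cdot\eta')h=s$, and since all idempotents of $\mathscr{C}_{\mathbb{N}}(\alpha,\beta)$ have $h$-image $s$ we get $(\psi)h=(\psi)h\cdot s=(\psi)h\cdot(\eta)h=(\psi\cdot\eta)h=s$; here I use that $s=(\eta)h$ is idempotent in $S$ and absorbs $(\psi)h$ on the appropriate side because $\psi\cdot\eta$ and $\eta\cdot\psi$ can both be taken to be idempotents of $\mathscr{C}_{\mathbb{N}}(\alpha,\beta)$ (using symmetry of Lemma~\ref{lemma-2.4a} and commutativity of the band). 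Thus $(\psi)h=s=(\varepsilon)h$ for every $\psi$, which is the claim. The main obstacle I anticipate is the bookkeeping in the last paragraph: one must be careful that multiplication by the bicyclic idempotent $\eta$ produces an \emph{idempotent}, not just an arbitrary element, on \emph{both} sides, and that the absorbing element $s$ acts as a two-sided identity on $(\psi)h$; this is exactly where the idempotent-commuting statement in Lemma~\ref{lemma-2.4a} and the structure $E(\mathscr{I}_{\infty}^{\!\nearrow}(\mathbb{N}))\cong(\mathscr{P}_{<\omega}(\mathbb{N}),\subseteq)$ are needed.
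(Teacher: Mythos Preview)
Your overall strategy---reduce to comparable idempotents, then push the collapse into the bicyclic subsemigroup, then pull it back out to all of $E(\mathscr{I}_{\infty}^{\!\nearrow}(\mathbb{N}))$---matches the paper's. But the middle step has a genuine gap.

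In your second paragraph you pick $\varphi_1>\varphi_2$ in $E(\mathscr{C}_{\mathbb{N}}(\alpha,\beta))$ with $\varphi_1,\varphi_2\leqslant\varepsilon'$ and then write a chain of equalities ending in ``$(\varphi_1)h=(\varphi_2)h$''. But every equality you wrote is of the form $(\varphi_i)h=(\varphi_i\varepsilon')h=(\varphi_i\varepsilon)h=(\varphi_i)h$, which is vacuous: since $\varphi_i\leqslant\varepsilon'\leqslant\varepsilon$, all those products equal $\varphi_i$ already. Nothing links $\varphi_1$ to $\varphi_2$, and the hypothesis $(\varepsilon')h=(\varepsilon)h$ is never actually used in a nontrivial way. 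One might try instead to multiply $\varepsilon'<\varepsilon$ by a bicyclic idempotent $\eta$ so that $\varepsilon'\eta$ and $\varepsilon\eta$ are distinct bicyclic idempotents; but since $\operatorname{dom}\eta=\{m,m+1,\ldots\}$, any element of $\operatorname{dom}\varepsilon\setminus\operatorname{dom}\varepsilon'$ lying below $m$ is erased, and one can easily have $\varepsilon'\eta=\varepsilon\eta$. So this simple multiplication idea does not work either.

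The missing idea is a conjugation. The paper first reduces (as you did) to $\varepsilon'<\varepsilon$ with $\operatorname{dom}\varepsilon\setminus\operatorname{dom}\varepsilon'=\{n\}$ a singleton, and then exhibits explicit $\varepsilon_0\in E(\mathscr{I}_{\infty}^{\!\nearrow}(\mathbb{N}))$ and $\lambda\in\mathscr{I}_{\infty}^{\!\nearrow}(\mathbb{N})$ such that $\lambda^{-1}\varepsilon_0\,\varepsilon\,\varepsilon_0\lambda$ and $\lambda^{-1}\varepsilon_0\,\varepsilon'\,\varepsilon_0\lambda$ are \emph{distinct} idempotents of $\mathscr{C}_{\mathbb{N}}(\alpha,\beta)$; applying $h$ then gives two bicyclic idempotents with the same image, and Corollary~1.32 of \cite{CP} kicks in. Your argument needs some device of this sort to transport the distinction $\varepsilon'\neq\varepsilon$ into the bicyclic part rather than below it.

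Your final ``spreading'' step is salvageable but the write-up is muddled: from $(\psi\eta)h=s$ you only get $(\psi)h\cdot s=s$, not $(\psi)h=s$. The clean way is to note that the identity $\mathbb{I}\in E(\mathscr{C}_{\mathbb{N}}(\alpha,\beta))$, so $(\mathbb{I})h=s$; then $(\psi)h=(\mathbb{I}\psi)h=s\cdot(\psi)h$, and combining with $(\psi)h\cdot s=s$ and commutativity of idempotents in the (inverse) image gives $(\psi)h=s$. Alternatively, the paper avoids this computation entirely by using bisimplicity (Proposition~\ref{proposition-2.1}$(iv)$) together with Lemma~\ref{lemma-2.4c}: pick $\gamma$ with $\gamma\gamma^{-1}=\varepsilon$, $\gamma^{-1}\gamma=\psi$, find $\varepsilon_0\in E(\mathscr{C}_{\mathbb{N}}(\alpha,\beta))$ with $\gamma^{-1}\varepsilon_0\gamma\in E(\mathscr{C}_{\mathbb{N}}(\alpha,\beta))$, and compute $(\psi)h=(\gamma^{-1}\varepsilon\gamma)h=(\gamma^{-1}\varepsilon_0\gamma)h=(\varepsilon)h$.
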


\begin{proof}
We consider the following cases:
\begin{itemize}
    \item[(1)] $\varepsilon,\varphi\in
      E(\mathscr{C}_{\mathbb{N}}(\alpha,\beta))$;
    \item[(2)] $\varepsilon$ and $\varphi$ are distinct comparable
      idempotents of
      $E(\mathscr{I}_{\infty}^{\!\nearrow}(\mathbb{N}))$; \;
    \item[(3)] $\varepsilon$ and $\varphi$ are distinct
      incomparable idempotents of
      $E(\mathscr{I}_{\infty}^{\!\nearrow}(\mathbb{N}))$.
\end{itemize}

Suppose case (1) holds. Then by Corollary~1.32 of \cite{CP} we have
that $(\chi)h=(\varepsilon)h$ for every $\chi\in
E(\mathscr{C}_{\mathbb{N}}(\alpha,\beta))$. Let $\psi$ be any
idempotent of $E(\mathscr{I}_{\infty}^{\!\nearrow}(\mathbb{N}))$
such that $\psi\notin E(\mathscr{C}_{\mathbb{N}}(\alpha,\beta))$.
Then by Proposition~\ref{proposition-2.1}$(v)$ there exists
$\gamma\in\mathscr{I}_{\infty}^{\!\nearrow}(\mathbb{N})$ such that
$\gamma\cdot\gamma^{-1}=\varepsilon$ and
$\gamma^{-1}\cdot\gamma=\psi$. By Lemma~\ref{lemma-2.4c} there
exist $\varepsilon_0,\varepsilon_1\in
E(\mathscr{C}_{\mathbb{N}}(\alpha,\beta))$ such that
$\gamma^{-1}\cdot\varepsilon_0\cdot\gamma=\varepsilon_1\in
E(\mathscr{C}_{\mathbb{N}}(\alpha,\beta))$. Therefore, we have
that
\begin{equation*}
\begin{split}
 (\psi)h &\,=(\psi\cdot\psi)h=
 (\gamma^{-1}\cdot\gamma\cdot\gamma^{-1}\cdot\gamma)h=
 (\gamma^{-1}\cdot\varepsilon\cdot\gamma)h=
 (\gamma^{-1})h\cdot(\varepsilon)h\cdot(\gamma)h=\\
         &\,=
             (\gamma^{-1})h\cdot(\varepsilon_0)h\cdot(\gamma)h=
             (\gamma^{-1}\cdot\varepsilon_0\cdot\gamma)h=
             (\varepsilon_1)h=(\varepsilon)h.
\end{split}
\end{equation*}

Suppose case (2) holds. Without loss of generality we can assume
that $\varepsilon\leqslant\varphi$. Then
$(\varepsilon)h=(\varepsilon_1)h=(\varphi)h$ for every idempotent
$\varepsilon_1\in
E(\mathscr{I}_{\infty}^{\!\nearrow}(\mathbb{N}))$ such that
$\varepsilon\leqslant\varepsilon_1\leqslant\varphi$. Therefore,
without loss of generality we can assume that
$\operatorname{dom}\varphi\setminus\operatorname{dom}\varepsilon$
is singleton. Let $\{n^\varphi_\varepsilon\}=
\operatorname{dom}\varphi\setminus\operatorname{dom}\varepsilon$.
Let $j$ be the minimal integer of $\operatorname{dom}\varepsilon$
such that $(i)\varepsilon=i$ for all $i\geqslant j$. We put
$
    \varepsilon_0=
    \left(%
\begin{array}{ccccc}
  n^\varphi_\varepsilon & j & j{+}1 & j{+}2 & \cdots \\
  n^\varphi_\varepsilon & j & j{+}1 & j{+}2 & \cdots \\
\end{array}%
\right) \mbox{ and } \lambda=
\left(%
\begin{array}{ccccc}
  n^\varphi_\varepsilon & j & j{+}1 & j{+}2 & \cdots \\
  j{-}1                 & j & j{+}1 & j{+}2 & \cdots \\
\end{array}%
\right). $ 
  Then
$\lambda^{-1}\cdot\varepsilon_0\cdot\varphi\cdot
\varepsilon_0\cdot\lambda$ and
$\lambda^{-1}\cdot\varepsilon_0\cdot\varepsilon\cdot
\varepsilon_0\cdot\lambda$ are distinct idempotents of the
subsemigroup $\mathscr{C}_{\mathbb{N}}(\alpha,\beta)$. Therefore,
we have that
\begin{equation*}
    (\lambda^{-1}\cdot\varepsilon_0\cdot\varphi\cdot
    \varepsilon_0\cdot\lambda)h =
        (\lambda^{-1}\cdot\varepsilon_0)h\cdot(\varphi)h\cdot
    (\varepsilon_0\cdot\lambda)h=
        (\lambda^{-1}\cdot\varepsilon_0)h\cdot(\varepsilon)h\cdot
    (\varepsilon_0\cdot\lambda)h=
        (\lambda^{-1}\cdot\varepsilon_0\cdot\varepsilon\cdot
    \varepsilon_0\cdot\lambda)h,
\end{equation*}
and hence case (1) holds.

Suppose case (3) holds. Then we have that
 $
 (\varepsilon)h=(\varepsilon\cdot\varepsilon)h=
 (\varepsilon)h\cdot(\varepsilon)h=
 (\varepsilon)h\cdot(\varphi)h= (\varepsilon\cdot\varphi)h.
 $
Since the idempotents $\varepsilon$ and $\varphi$ are distinct and
incomparable we conclude that $\varepsilon\cdot\varphi<\varepsilon$
and $\varepsilon\cdot\varphi<\varphi$, and hence case (2) holds.
\end{proof}

\begin{theorem}\label{theorem-2.5}
Let $S$ be a semigroup and
$h\colon\mathscr{I}_{\infty}^{\!\nearrow}(\mathbb{N})\rightarrow
S$  a non-annihilating homomorphism. Then either $h$ is a
monomorphism or $(\mathscr{I}_{\infty}^{\!\nearrow}(\mathbb{N}))h$
is a cyclic subgroup of $S$.
\end{theorem}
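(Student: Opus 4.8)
The plan is to split into two cases according to whether the restriction of $h$ to the band $E(\mathscr{I}_{\infty}^{\!\nearrow}(\mathbb{N}))$ is injective or not, and to use the already-established embedding of the bicyclic semigroup $\mathscr{C}_{\mathbb{N}}(\alpha,\beta)$ together with its rigidity (Corollary~1.32 of \cite{CP}) as the engine.

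First suppose $h$ is injective on $E(\mathscr{I}_{\infty}^{\!\nearrow}(\mathbb{N}))$. I would show that in this case $h$ is a monomorphism on all of $\mathscr{I}_{\infty}^{\!\nearrow}(\mathbb{N})$. Take $\lambda,\lambda'$ with $(\lambda)h=(\lambda')h$. Then $(\lambda\lambda^{-1})h=(\lambda'\lambda'^{-1})h$ and $(\lambda^{-1}\lambda)h=(\lambda'^{-1}\lambda')h$, so by injectivity on idempotents $\lambda$ and $\lambda'$ have the same domain and the same range; hence $\lambda\mathscr{L}\lambda'$ and $\lambda\mathscr{R}\lambda'$ by Proposition~\ref{proposition-2.1}$(ii)$, i.e. $\lambda\mathscr{H}\lambda'$, and then Proposition~\ref{proposition-2.1}$(iii)$ forces $\lambda=\lambda'$. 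So it remains to check that injectivity on the band really does follow once $h$ is not the degenerate case; equivalently, if $h$ is \emph{not} injective on the band, we must land in the cyclic-group alternative.

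So suppose $(\varepsilon)h=(\varphi)h$ for some distinct idempotents $\varepsilon,\varphi$. By Lemma~\ref{lemma-2.3} this collapses the whole band: $(\psi)h$ is one fixed element, say $f$, for every $\psi\in E(\mathscr{I}_{\infty}^{\!\nearrow}(\mathbb{N}))$. In particular $f$ is an idempotent of $S$ and $f=(\varepsilon_0)h$ where $\varepsilon_0$ is the identity of a bicyclic subsemigroup $\mathscr{C}$ furnished by Lemma~\ref{lemma-2.2}; restricting $h$ to $\mathscr{C}\cong\mathscr{C}(p,q)$ and applying Corollary~1.32 of \cite{CP}, since $h|_{\mathscr{C}}$ already identifies all idempotents of $\mathscr{C}$ it cannot be an isomorphism, hence $(\mathscr{C})h$ is a cyclic group with identity $f$; write $G=(\mathscr{C})h$. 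Now I would show $(\mathscr{I}_{\infty}^{\!\nearrow}(\mathbb{N}))h=G$ (so in particular it is a cyclic group). Given an arbitrary $\lambda$, use Lemma~\ref{lemma-2.4} to get $\mu\in\mathscr{C}_{\mathbb{N}}(\alpha,\beta)$ and $\varepsilon\in E(\mathscr{C}_{\mathbb{N}}(\alpha,\beta))$ with $\lambda\varepsilon=\mu\varepsilon$ and $\varepsilon\lambda=\varepsilon\mu$. Then $(\lambda)h=(\lambda)h\cdot f=(\lambda)h\cdot(\varepsilon)h=(\lambda\varepsilon)h=(\mu\varepsilon)h=(\mu)h\cdot(\varepsilon)h=(\mu)h\cdot f=(\mu)h$ — the middle equalities using that $f$ is the identity of $G$ and that $(\mu)h\in(\mathscr{C}_{\mathbb{N}}(\alpha,\beta))h=G$ (the bicyclic image is the same cyclic group by the same Corollary~1.32 argument applied to $\mathscr{C}_{\mathbb{N}}(\alpha,\beta)$ itself, whose idempotents are also all collapsed to $f$). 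Hence $(\lambda)h=(\mu)h\in G$, so $(\mathscr{I}_{\infty}^{\!\nearrow}(\mathbb{N}))h\subseteq G$, and the reverse inclusion is clear; thus the image is the cyclic group $G$.

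The main obstacle is the bookkeeping in the collapsed-band case: one has to be sure that the single idempotent value $f$ is a two-sided identity for $(\mathscr{I}_{\infty}^{\!\nearrow}(\mathbb{N}))h$ (which follows because every $\lambda$ satisfies $\lambda=\lambda\cdot(\lambda^{-1}\lambda)$ with $\lambda^{-1}\lambda$ an idempotent, giving $(\lambda)h=(\lambda)h\cdot f$, and symmetrically on the left), and that both bicyclic copies $\mathscr{C}$ and $\mathscr{C}_{\mathbb{N}}(\alpha,\beta)$ map onto the \emph{same} cyclic group — here one invokes that any two idempotents of $\mathscr{I}_{\infty}^{\!\nearrow}(\mathbb{N})$ (in particular $\varepsilon_0$ and any idempotent of $\mathscr{C}_{\mathbb{N}}(\alpha,\beta)$) have the same image $f$, so both subgroups share the identity $f$, and since any homomorphic image of the bicyclic semigroup that is nondegenerate on idempotents would be bicyclic (impossible here) while otherwise it is generated around $f$, the images coincide as the unique such cyclic group; Lemma~\ref{lemma-2.4} then transports the conclusion from the bicyclic core to the whole semigroup. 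Everything else is formal.
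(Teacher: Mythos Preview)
Your argument is correct and takes a somewhat different route from the paper's. Both proofs reduce (via Lemma~\ref{lemma-2.3}) to the case where all idempotents collapse to a single element $f$, but from there the paper invokes the machinery of the least group congruence $\mathfrak{g}$: it identifies $\mathscr{I}_{\infty}^{\!\nearrow}(\mathbb{N})/\mathfrak{g}$ explicitly with the additive group $\mathbb{Z}$ via the ``eventual shift'' map (formula~(\ref{equation-2.a})), so any group homomorphic image is a quotient of $\mathbb{Z}$ and hence cyclic. You instead work directly: Lemma~\ref{lemma-2.4} pushes each $(\lambda)h$ into the image of the bicyclic subsemigroup $\mathscr{C}_{\mathbb{N}}(\alpha,\beta)$, which is cyclic by Corollary~1.32 of \cite{CP}. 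Your approach is more elementary and self-contained; the paper's buys the extra structural information that the maximal group quotient is precisely $\mathbb{Z}$. Your Case~1 handling via $\mathscr{H}$-triviality (Proposition~\ref{proposition-2.1}$(iii)$) is also cleaner than the paper's corresponding step, which passes from $(\alpha)h=(\beta)h$ to $(\alpha\alpha^{-1})h=(\beta\beta^{-1})h$ without explicitly noting why the latter idempotents may be taken distinct.

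One point to clean up: the detour through Lemma~\ref{lemma-2.2} and the auxiliary bicyclic copy $\mathscr{C}$ is unnecessary and leads you to the assertion that $(\mathscr{C})h$ and $(\mathscr{C}_{\mathbb{N}}(\alpha,\beta))h$ ``coincide as the unique such cyclic group''. That justification is not valid --- two cyclic subgroups of $S$ sharing an identity need not be equal, and no uniqueness principle is available here. The fix is simply to drop $\mathscr{C}$: set $G=(\mathscr{C}_{\mathbb{N}}(\alpha,\beta))h$ from the start (its idempotents already collapse to $f$, so Corollary~1.32 applies), and your Lemma~\ref{lemma-2.4} computation then yields $(\lambda)h=(\mu)h\in G$ with no further work. (The equality $(\mathscr{C})h=(\mathscr{C}_{\mathbb{N}}(\alpha,\beta))h$ is in fact true, but the correct justification is again a Lemma~\ref{lemma-2.4}-style argument, not uniqueness.)
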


\begin{proof}
Suppose that
$h\colon\mathscr{I}_{\infty}^{\!\nearrow}(\mathbb{N})\rightarrow
S$ is not an isomorphism ``into''. Then $(\alpha)h=(\beta)h$, for
some distinct
$\alpha,\beta\in\mathscr{I}_{\infty}^{\!\nearrow}(\mathbb{N})$.
Since $\mathscr{I}_{\infty}^{\!\nearrow}(\mathbb{N})$ is an
inverse semigroup we conclude that
\begin{equation*}
    \left(\alpha^{-1}\right)h=\big((\alpha)h\big)^{-1}=
    \big((\beta)h\big)^{-1}=\left(\beta^{-1}\right)h
\end{equation*}
and hence $(\alpha\alpha^{-1})h=(\beta\beta^{-1})h$. Therefore the
assertion of Lemma~\ref{lemma-2.3} holds. Since every homomorphic
image of an inverse semigroup is an inverse semigroup we conclude
that $(\mathscr{I}_{\infty}^{\!\nearrow}(\mathbb{N}))h$ is a
subgroup of $S$.

Since the map
$h\colon\mathscr{I}_{\infty}^{\!\nearrow}(\mathbb{N})\rightarrow
S$ is a group homomorphism we have that $h$ generates a group
congruence $\frak{h}$ on
$\mathscr{I}_{\infty}^{\!\nearrow}(\mathbb{N})$. If $\mathfrak{c}$
is any congruence on the semigroup
$\mathscr{I}_{\infty}^{\!\nearrow}(\mathbb{N})$ then the mapping
$\mathfrak{c}\mapsto\mathfrak{c}\vee\mathfrak{g}$ maps the
congruence $\mathfrak{c}$ onto a group congruence
$\mathfrak{c}\vee\mathfrak{g}$, where $\mathfrak{g}$ is the least
group congruence on
$\mathscr{I}_{\infty}^{\!\nearrow}(\mathbb{N})$ (cf.
\cite[Section~III]{Petrich1984}).

Such a mapping is a map from the lattice of all congruences of the
semigroup $\mathscr{I}_{\infty}^{\!\nearrow}(\mathbb{N})$ onto the
lattice of all group congruences of
$\mathscr{I}_{\infty}^{\!\nearrow}(\mathbb{N})$
~\cite{Petrich1984}. By Lemma~III.5.2 of~\cite{Petrich1984}, the
elements $\gamma$ and $\delta$ of the semigroup
$\mathscr{I}_{\infty}^{\!\nearrow}(\mathbb{N})$ are
$\mathfrak{g}$-equivalent if and only if there exists an
idempotent $\varepsilon\in
E(\mathscr{I}_{\infty}^{\!\nearrow}(\mathbb{N}))$ such that
$\gamma\cdot\varepsilon=\delta\cdot\varepsilon$.
Lemma~\ref{lemma-2.4} implies that for every
$\gamma\in\mathscr{I}_{\infty}^{\!\nearrow}(\mathbb{N})$ there
exists $\delta\in\mathscr{C}_{\mathbb{N}}(\alpha,\beta)$ such that
$\gamma\mathfrak{g}\delta$. Therefore the least group congruence
$\mathfrak{g}$ on $\mathscr{I}_{\infty}^{\!\nearrow}(\mathbb{N})$
induces the least group congruence on its subsemigroup
$\mathscr{C}_{\mathbb{N}}(\alpha,\beta)$.

We observe that $\gamma\mathfrak{g}\delta$ in
$\mathscr{I}_{\infty}^{\!\nearrow}(\mathbb{N})$ (or in
$\mathscr{C}_{\mathbb{N}}(\alpha,\beta)$) if and only if there
exists a positive integer $i$ such that the restrictions of the
partial mapping $\gamma$ and $\delta$ onto the set
$\{i,i+1,i+2,\ldots\}$ coincide. Then we define the map
$f\colon\mathscr{I}_{\infty}^{\!\nearrow}(\mathbb{N})\rightarrow
\mathbb{Z}_+$ onto the additive group of integers as follow:
\begin{equation}\label{equation-2.a}
    (\gamma)f=n \quad \mbox{ if } \; (i)\gamma=i+n \;
    \mbox{ for infinitely many positive integers } \; i.
\end{equation}
The definition of the semigroup
$\mathscr{I}_{\infty}^{\!\nearrow}(\mathbb{N})$ implies that such
a map $f$ is well-defined. The map
$f\colon\mathscr{I}_{\infty}^{\!\nearrow}(\mathbb{N})\rightarrow
\mathbb{Z}_+$ generates the least group congruence $\mathfrak{g}$
on the semigroup $\mathscr{I}_{\infty}^{\!\nearrow}(\mathbb{N})$
and hence $f$ is a group homomorphism. This completes the proof of
the theorem.
\end{proof}

\begin{remark}\label{remerk-2.6}
We observe that the following conditions are equivalent:
\begin{itemize}
    \item[$(i)$] $\gamma\mathfrak{g}\delta$ in
$\mathscr{I}_{\infty}^{\!\nearrow}(\mathbb{N})$;
    \item[$(ii)$] there exists
$\varepsilon\in E(\mathscr{I}_{\infty}^{\!\nearrow}(\mathbb{N}))$
such that $\varepsilon\cdot\gamma=\varepsilon\cdot\delta$;
    \item[$(iii)$] there exists
$\varepsilon\in E(\mathscr{C}_{\mathbb{N}}(\alpha,\beta))$ such
that $\varepsilon\cdot\gamma=\varepsilon\cdot\delta$; and
    \item[$(iv)$] there exists
$\varepsilon\in E(\mathscr{C}_{\mathbb{N}}(\alpha,\beta))$ such
that $\gamma\cdot\varepsilon=\delta\cdot\varepsilon$.
\end{itemize}
\end{remark}


\section{On  topological semigroup
$\mathscr{I}_{\infty}^{\!\nearrow}(\mathbb{N})$}

\begin{lemma}\label{referee}
If $E$ is an infinite semilattice satisfying that
${\uparrow}e$ is
finite for all $e\in E$, then the only locally compact, Hausdorff
topology relative to which $E$ is a topological semilattice is the
discrete topology.
\end{lemma}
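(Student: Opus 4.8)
The plan is first to observe that the hypothesis means precisely that $(E,\tau)$ is a locally compact Hausdorff topological semilattice, and to record two standard facts valid in any Hausdorff topological semilattice: the natural partial order $\leqslant$ is closed in $E\times E$, being the preimage of the diagonal under the continuous map $(x,y)\mapsto(xy,x)$; consequently each principal ideal ${\downarrow}e=\{x\in E\mid xe=x\}$ is closed, and the idempotent retraction $\rho_e\colon E\to{\downarrow}e$, $\rho_e(x)=xe$, is continuous. We must show that every singleton $\{e\}$ is open.

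\emph{Reduction to a semilattice with a greatest element.} I would show that it suffices to prove that each $e\in E$ is isolated in the closed subsemilattice $({\downarrow}e,\tau|_{{\downarrow}e})$, which is again a locally compact Hausdorff topological semilattice with all principal filters finite, but now has $e$ as greatest element. Indeed, if $e$ is isolated in ${\downarrow}e$, pick an open $U_{1}\ni e$ with $U_{1}\cap{\downarrow}e=\{e\}$; since ${\uparrow}e\setminus\{e\}$ is finite and $E$ is Hausdorff, pick an open $U_{2}\ni e$ with $U_{2}\cap{\uparrow}e=\{e\}$; by continuity of $\rho_{e}$ at $e$, pick an open $U_{3}$ with $e\in U_{3}\subseteq U_{1}\cap U_{2}$ and $\rho_{e}(U_{3})\subseteq U_{1}$. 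For $x\in U_{3}$ we get $xe\in U_{1}\cap{\downarrow}e=\{e\}$, so $e\leqslant x$, i.e. $x\in{\uparrow}e$; but $x\in U_{2}$, so $x\in U_{2}\cap{\uparrow}e=\{e\}$. Thus $U_{3}=\{e\}$, and $e$ is isolated in $E$. So from now on we may assume $E$ has a greatest element $1$ and only have to show $\{1\}$ is open.

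\emph{The easy case and the heart of the matter.} If $E$ has a least element $0$, then ${\uparrow}0=E$ is finite, so $E$ is a finite Hausdorff space and hence discrete. Otherwise, suppose for a contradiction that $1$ is not isolated, and fix a compact neighbourhood $K$ of $1$ — this is where local compactness enters. The plan is to extract from $K$, with the help of the continuity of the multiplication and of the finiteness of the filters, a subsemilattice $N$ of $E$ that is compact and is a neighbourhood of $1$. Granting this, $N$ is a compact Hausdorff topological semilattice; the order-decreasing net $\bigl(\prod F\bigr)_{F}$ of finite products of elements of $N$, indexed by the finite non-empty subsets $F$ of $N$ directed by inclusion, lies in the compact set $N$ and therefore converges, necessarily to $\inf N$, which is then a least element $0_{N}$ of $N$. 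Hence $N={\uparrow}_{N}0_{N}\subseteq{\uparrow}_{E}0_{N}$ is finite, so $1$ has a finite neighbourhood in the Hausdorff space $E$; this forces $\{1\}$ to be open, contradicting the assumption. Therefore $\{1\}$ is open and $E$ is discrete.

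\emph{Where the difficulty lies.} The one non-routine step is the construction of the compact subsemilattice neighbourhood $N$: local compactness by itself does not suffice, since locally compact — indeed even compact — topological semilattices need not have arbitrarily small subsemilattices (the phenomenon behind Lawson's examples), so the hypothesis that every ${\uparrow}x$ is finite must be brought in exactly here. Concretely, from $K$ and continuity of the multiplication at $(1,1)$ one readily builds a decreasing sequence $W_{0}\supseteq W_{1}\supseteq\cdots$ of open neighbourhoods of $1$ with $\overline{W_{0}}\subseteq K$ and $\overline{W_{n+1}}\cdot\overline{W_{n+1}}\subseteq\overline{W_{n}}$ for all $n$, so that $\bigcap_{n}\overline{W_{n}}$ is already a compact subsemilattice containing $1$; the crux, which I expect to be the main obstacle, is to use the finiteness of the ${\uparrow}x$'s to arrange that this set — or a suitable variant of it — actually be a neighbourhood of $1$, which would complete the proof.
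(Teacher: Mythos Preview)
Your proposal has a genuine gap, and you identify it yourself: the construction of a compact subsemilattice that is also a \emph{neighbourhood} of $1$ is never carried out. The set $\bigcap_n\overline{W_n}$ you build is indeed a compact subsemilattice containing $1$, but nothing forces it to have non-empty interior; in a space that is not first countable a countable intersection of neighbourhoods of $1$ need not be a neighbourhood of $1$ at all. You say that the finiteness of the filters ${\uparrow}x$ ``must be brought in exactly here,'' but you do not say how, and this is not a routine step---it is the entire content of the lemma. Until that is supplied, what you have is an outline with the key idea missing, not a proof. (Your reduction to ${\downarrow}e$ and the argument that a compact subsemilattice neighbourhood must be finite are both correct; they are just not enough.)

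The paper's argument bypasses this difficulty entirely by a direct chain construction. Assuming a non-isolated point $e$ with compact neighbourhood $K$, one takes a net in $\operatorname{Int}_E(K)\setminus\{e\}$ converging to $e$; multiplying by $e$ and using finiteness of ${\uparrow}e$, one passes to a net of elements strictly below $e$, still in $\operatorname{Int}_E(K)$, still converging to $e$. Fixing one term $e_1<e$ and repeating the argument with $e_1$ in place of $e$ (this is where finiteness of ${\uparrow}e_1$ is used), then with $e_2<e_1$, and so on, produces an infinite strictly decreasing chain $e>e_1>e_2>\cdots$ inside $K$. Compactness of $K$ gives a cluster point $s$; since each ${\downarrow}e_n$ is closed and contains a tail of the chain, $s\leqslant e_n$ for all $n$, whence ${\uparrow}s$ is infinite---a contradiction. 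This uses the hypothesis on ${\uparrow}x$ at each inductive step rather than in one structural blow, and avoids the question of small subsemilattice neighbourhoods altogether.
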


\begin{proof}
Assume that $E$ has a locally compact, Hausdorff topology under
which it is a topological semilattice, and that $E$ has a
non-isolated point $e$ in this topology. Since $E$ is Hausdorff,
every neighbourhood of $e$ has infinitely many elements. Let $K$
be a compact neighbourhood of $e$. Then there is a net $\langle
e_i\rangle_{i\in{\mathscr{I}}}\subseteq\operatorname{Int}_E(K)\setminus\{
e\}$ with $\lim_ie_i=e$. Since $E$ is a topological semilattice,
$e=e^2=e\cdot\lim_ie_i=\lim_i(e\cdot e_i)$, and since
${\uparrow}e$ is finite, we can assume that $e\cdot e_i<e$ are
distinct for all $i\in\mathscr{I}$. Since
$e\in\operatorname{Int}_E(K)$, we can also assume $e\cdot
e_i\in\operatorname{Int}_E(K)$ for all $i$. Thus, we can assume
$e_i<e$ satisfy $e_i\in\operatorname{Int}_E(K)$ for all $i$.

Choose one such $e_j$, and then note that $e_j=e_j\cdot
e=e_j\cdot\lim_ie_i=\lim_i(e_j\cdot e_i)$. The same argument we
just gave for $e$ then implies that $e_i<e_j$ for all $i$, that
$\lim_ie_i=e_j$, and that $e_i\in\operatorname{Int}_E(K)$ for all
$i$. We let $e_1=e_j$, and now repeat the argument. Since $K$ is
compact, this sequence has a limit point, $s$ in $K$, and the
continuity of the semilattice operation implies $s$ is another
idempotent and that $s< e_n$ for all $n$. But then ${\uparrow}s$
is infinite, contrary to our hypothesis. Hence $E$ cannot have a
nonisolated point, so it is discrete.
\end{proof}

Proposition~\ref{proposition-2.1a} and Lemma~\ref{referee} imply
the following:

\begin{lemma}\label{lemma-3.2}
If $\mathscr{I}_{\infty}^{\!\nearrow}(\mathbb{N})$ is a locally
compact Hausdorff topological semigroup, then
$E(\mathscr{I}_{\infty}^{\!\nearrow}(\mathbb{N}))$ with the
induces from $\mathscr{I}_{\infty}^{\!\nearrow}(\mathbb{N})$
topology is a discrete topological semilattice.
\end{lemma}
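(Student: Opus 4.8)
The plan is to deduce the statement from Lemma~\ref{referee} by checking its hypotheses for the band $E:=E(\mathscr{I}_{\infty}^{\!\nearrow}(\mathbb{N}))$ endowed with the topology induced from $\mathscr{I}_{\infty}^{\!\nearrow}(\mathbb{N})$: namely, that $E$ is an infinite semilattice, that the induced topology is locally compact and Hausdorff and makes $E$ a topological semilattice, and that ${\uparrow}\varepsilon$ is finite for every $\varepsilon\in E$.

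The first step is to observe that $E$ is a \emph{closed} subset of $\mathscr{I}_{\infty}^{\!\nearrow}(\mathbb{N})$. Indeed, in any Hausdorff topological semigroup the band is the equalizer $\{x\mid x\cdot x=x\}$ of the continuous maps $x\mapsto x\cdot x$ and $x\mapsto x$, hence it is closed, being the preimage of the diagonal under a continuous map into a Hausdorff space. Since a closed subspace of a locally compact Hausdorff space is again locally compact and Hausdorff, $E$ carries a locally compact Hausdorff topology; moreover the restriction to $E\times E$ of the continuous multiplication of $\mathscr{I}_{\infty}^{\!\nearrow}(\mathbb{N})$ is continuous, so $E$, being algebraically a semilattice (the idempotents of an inverse semigroup commute), is a topological semilattice. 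It is infinite, e.g.\ by Proposition~\ref{proposition-2.1}$(vii)$.

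It remains to check that ${\uparrow}\varepsilon$ is finite for each $\varepsilon\in E$. If $\iota\in{\uparrow}\varepsilon$ then $\varepsilon\leqslant\iota$, i.e.\ $\varepsilon\cdot\iota=\varepsilon$, so
$$
{\uparrow}\varepsilon\subseteq\{\chi\in\mathscr{I}_{\infty}^{\!\nearrow}(\mathbb{N})\mid\varepsilon\cdot\chi=\varepsilon\},
$$
and the right-hand side is finite by Proposition~\ref{proposition-2.1a}. (Alternatively, by Proposition~\ref{proposition-2.1}$(vi)$ and $(vii)$, ${\uparrow}\varepsilon$ corresponds under $h$ to the power set of the finite set $\mathbb{N}\setminus\operatorname{dom}\varepsilon$, hence is finite.) With all hypotheses verified, Lemma~\ref{referee} yields that $E$ is discrete. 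The only genuine point is the transfer of local compactness from $\mathscr{I}_{\infty}^{\!\nearrow}(\mathbb{N})$ to $E$, which is exactly what closedness of the band provides; everything else is a routine application of the facts already established.
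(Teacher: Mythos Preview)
Your argument is correct and follows exactly the route the paper intends: the paper states that Lemma~\ref{lemma-3.2} is a direct consequence of Proposition~\ref{proposition-2.1a} and Lemma~\ref{referee}, and you have simply supplied the details (closedness of the band to transfer local compactness, and finiteness of ${\uparrow}\varepsilon$ via Proposition~\ref{proposition-2.1a}) that the paper leaves implicit.
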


\begin{theorem}\label{theorem-3.3}
Every locally compact Hausdorff topology on the semigroup
$\mathscr{I}_{\infty}^{\!\nearrow}(\mathbb{N})$ such that
$\mathscr{I}_{\infty}^{\!\nearrow}(\mathbb{N})$ is a topological
inverse semigroup,
is discrete.
\end{theorem}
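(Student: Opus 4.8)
The plan is to reduce the statement to Lemma~\ref{lemma-3.2} together with the structure of $\mathscr{I}_{\infty}^{\!\nearrow}(\mathbb{N})$ as an inverse semigroup. Assume that $\tau$ is a locally compact Hausdorff topology on $\mathscr{I}_{\infty}^{\!\nearrow}(\mathbb{N})$ making it a topological inverse semigroup. By Lemma~\ref{lemma-3.2}, the band $E=E(\mathscr{I}_{\infty}^{\!\nearrow}(\mathbb{N}))$ is discrete in the subspace topology; in particular, each idempotent $\varepsilon$ is an isolated point of $E$, so there is an open set $U_\varepsilon\subseteq \mathscr{I}_{\infty}^{\!\nearrow}(\mathbb{N})$ with $U_\varepsilon\cap E=\{\varepsilon\}$.

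The key step is to upgrade isolation in $E$ to isolation in the whole semigroup. I would fix $\varepsilon\in E$ and use the continuity of the map $x\mapsto xx^{-1}$ (which is continuous since both multiplication and inversion are continuous) to pull back $U_\varepsilon$: the set $V_\varepsilon=\{x\in\mathscr{I}_{\infty}^{\!\nearrow}(\mathbb{N})\mid xx^{-1}\in U_\varepsilon\}$ is an open neighbourhood of $\varepsilon$, and every $x\in V_\varepsilon$ satisfies $xx^{-1}=\varepsilon$, i.e. $\operatorname{dom}x=\operatorname{dom}\varepsilon$ by Proposition~\ref{proposition-2.1}$(ii)$ (the $\mathscr{R}$-class description). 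Intersecting with $\{x\mid x^{-1}x\in U_\varepsilon\}$ we obtain an open neighbourhood $W_\varepsilon$ of $\varepsilon$ consisting only of elements $x$ with $xx^{-1}=\varepsilon$ and $x^{-1}x=\varepsilon$; but such $x$ lies in the group $\mathscr{H}$-class of $\varepsilon$, which by Proposition~\ref{proposition-2.1}$(iii)$ is the singleton $\{\varepsilon\}$. Hence $W_\varepsilon=\{\varepsilon\}$ and each idempotent is isolated in $\mathscr{I}_{\infty}^{\!\nearrow}(\mathbb{N})$.

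Finally I would propagate isolation from idempotents to arbitrary elements. Let $\alpha\in\mathscr{I}_{\infty}^{\!\nearrow}(\mathbb{N})$ and put $\varepsilon=\alpha\alpha^{-1}\in E$. Since $\{\varepsilon\}$ is open, continuity of the map $x\mapsto x\alpha^{-1}$ gives an open neighbourhood $N=\{x\mid x\alpha^{-1}=\varepsilon\}$ of $\alpha$; and for $x\in N$ we have $x\alpha^{-1}\alpha=\varepsilon\alpha=\alpha$, i.e. the restriction of $x$ to $\operatorname{dom}(\alpha^{-1}\alpha)=\operatorname{ran}\alpha$'s preimage agrees with $\alpha$. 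More directly, combine $x\alpha^{-1}=\varepsilon$ with the analogous neighbourhood from $x\mapsto \alpha^{-1}x$ to force $x=\alpha$: intersecting the two open sets yields a neighbourhood of $\alpha$ contained in the $\mathscr H$-class of $\alpha$, which is $\{\alpha\}$ by Proposition~\ref{proposition-2.1}$(iii)$. Therefore every point of $\mathscr{I}_{\infty}^{\!\nearrow}(\mathbb{N})$ is isolated and $\tau$ is discrete.

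The main obstacle is the passage carried out in the second paragraph: local compactness is used only through Lemma~\ref{lemma-3.2}, and after that one must be careful that the translations and the maps $x\mapsto xx^{-1}$, $x\mapsto x^{-1}x$ are genuinely continuous — this is exactly where the hypothesis ``topological inverse semigroup'' (continuity of inversion, not merely of multiplication) is essential, and it is what lets us identify a neighbourhood of an idempotent with a subset of a single $\mathscr H$-class. Once that identification is in place, Proposition~\ref{proposition-2.1} does the rest and the argument is purely formal.
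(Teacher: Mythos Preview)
Your overall strategy matches the paper's: invoke Lemma~\ref{lemma-3.2} to get discreteness of $E$, then use continuity of the maps $x\mapsto xx^{-1}$ and $x\mapsto x^{-1}x$ together with the triviality of $\mathscr{H}$-classes (Proposition~\ref{proposition-2.1}$(iii)$). However, your third paragraph contains a genuine error. The conditions $x\alpha^{-1}=\alpha\alpha^{-1}$ and $\alpha^{-1}x=\alpha^{-1}\alpha$ do \emph{not} force $x$ into the $\mathscr{H}$-class of $\alpha$. For instance, if $\alpha$ is the identity on $\{2,3,4,\ldots\}$ and $x$ is the identity on all of $\mathbb{N}$, both equalities hold yet $x\ne\alpha$. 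In general these two conditions say only that $\operatorname{dom}\alpha\subseteq\operatorname{dom}x$ and $x|_{\operatorname{dom}\alpha}=\alpha$, so the intersection of your two translation preimages is the (usually infinite) set of extensions of $\alpha$ in the canonical partial order, not a subset of $H_\alpha$.

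The fix is immediate and is exactly what the paper does: drop the translation detour and apply the argument of your second paragraph directly to an arbitrary $\alpha$. The set
\[
\{x\mid xx^{-1}=\alpha\alpha^{-1}\}\cap\{x\mid x^{-1}x=\alpha^{-1}\alpha\}
\]
is open (the maps $x\mapsto xx^{-1}$ and $x\mapsto x^{-1}x$ are continuous into $E$, where each singleton is open by Lemma~\ref{lemma-3.2}), and by Proposition~\ref{proposition-2.1}$(iii)$ it equals $\{\alpha\}$. So your second paragraph already contains the entire proof; there is no need to isolate idempotents first and then propagate via translations.
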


\begin{proof}
By Lemma~\ref{lemma-3.2},
the band
$E(\mathscr{I}_{\infty}^{\!\nearrow}(\mathbb{N}))$ is a discrete
topological space. Since
$\mathscr{I}_{\infty}^{\!\nearrow}(\mathbb{N})$ is a topological
inverse semigroup, the maps
$h\colon\mathscr{I}_{\infty}^{\!\nearrow}(\mathbb{N})\rightarrow
E(\mathscr{I}_{\infty}^{\!\nearrow}(\mathbb{N}))$ and
$f\colon\mathscr{I}_{\infty}^{\!\nearrow}(\mathbb{N})\rightarrow
E(\mathscr{I}_{\infty}^{\!\nearrow}(\mathbb{N}))$ defines by the
formulae $(\alpha)h=\alpha\cdot\alpha^{-1}$ and
$(\alpha)f=\alpha^{-1}\cdot\alpha$ are continuous and for every
two idempotents $\varepsilon$ and $\varphi$ of the semigroup
$\mathscr{I}_{\infty}^{\!\nearrow}(\mathbb{N})$ there exists a
unique element $\chi$ in
$\mathscr{I}_{\infty}^{\!\nearrow}(\mathbb{N})$ such that
$\chi\cdot\chi^{-1}=\varepsilon$ and $\chi^{-1}\cdot\chi=\varphi$,
we have that every element of the topological semigroup
$\mathscr{I}_{\infty}^{\!\nearrow}(\mathbb{N})$ is an isolated
point of the topological space
$\mathscr{I}_{\infty}^{\!\nearrow}(\mathbb{N})$.
\end{proof}

The following theorem describes the closure of the discrete
semigroup $\mathscr{I}_{\infty}^{\!\nearrow}(\mathbb{N})$ in a
topological semigroup.

\begin{theorem}\label{theorem-3.4}
If a topological semigroup S contains
$\mathscr{I}_{\infty}^{\!\nearrow}(\mathbb{N})$ as a proper, dense
discrete subsemigroup, then
$\mathscr{I}_{\infty}^{\!\nearrow}(\mathbb{N})$ is an open
subsemigroup of $S$ and
$S\setminus\mathscr{I}_{\infty}^{\!\nearrow}(\mathbb{N})$ is an
ideal of $S$.
\end{theorem}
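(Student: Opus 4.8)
The plan is to prove the two assertions — that $\mathscr{I}_{\infty}^{\!\nearrow}(\mathbb{N})$ is open in $S$, and that its complement is an ideal — essentially by exploiting the fact that every point of $\mathscr{I}_{\infty}^{\!\nearrow}(\mathbb{N})$ is isolated together with the finiteness of translations established in Proposition~\ref{proposition-2.1a}. First I would show $\mathscr{I}_{\infty}^{\!\nearrow}(\mathbb{N})$ is open in $S$. Fix $\alpha\in\mathscr{I}_{\infty}^{\!\nearrow}(\mathbb{N})$; since $\{\alpha\}$ is open in the subspace $\mathscr{I}_{\infty}^{\!\nearrow}(\mathbb{N})$ and the latter is dense in $S$, there is an open $U\subseteq S$ with $U\cap\mathscr{I}_{\infty}^{\!\nearrow}(\mathbb{N})=\{\alpha\}$. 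The key step is to produce from $U$ a smaller open neighbourhood of $\alpha$ that misses $S\setminus\mathscr{I}_{\infty}^{\!\nearrow}(\mathbb{N})$ entirely. This is the standard Eberhart--Selden argument: using continuity of multiplication, pick open $V\ni\alpha$ with $V\cdot V^{-1}$-type products — more precisely, fix an idempotent $\varepsilon=\alpha^{-1}\alpha\in\mathscr{I}_{\infty}^{\!\nearrow}(\mathbb{N})$, choose open neighbourhoods $W$ of $\alpha$ and $O$ of $\varepsilon$ with $W\cdot O\subseteq U$, and use that $\{\varepsilon\}$ is also open to arrange $O\cap\mathscr{I}_{\infty}^{\!\nearrow}(\mathbb{N})=\{\varepsilon\}$; then for any $s\in W$ one argues $s=s\varepsilon$ (approximating $s$ by elements of $\mathscr{I}_{\infty}^{\!\nearrow}(\mathbb{N})$ and using that they are eventually equal to $\alpha$) so $s\in W\cdot O\cap\mathscr{I}_{\infty}^{\!\nearrow}(\mathbb{N})=\{\alpha\}$, whence $W=\{\alpha\}$ is open in $S$.

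With openness in hand, $I:=S\setminus\mathscr{I}_{\infty}^{\!\nearrow}(\mathbb{N})$ is closed, and it is nonempty since the embedding is proper. To see $I$ is an ideal, take $s\in I$ and $t\in S$ and suppose toward a contradiction that $st\in\mathscr{I}_{\infty}^{\!\nearrow}(\mathbb{N})$ (the case $ts$ is symmetric). Since $st$ is an isolated point of $S$, there is an open $N\ni st$ with $N\cap\mathscr{I}_{\infty}^{\!\nearrow}(\mathbb{N})=\{st\}$; by continuity of multiplication choose open $P\ni s$, $Q\ni t$ with $P\cdot Q\subseteq N$. By density, $P$ contains infinitely many elements of $\mathscr{I}_{\infty}^{\!\nearrow}(\mathbb{N})$ (infinitely many because $s$ is non-isolated in $S$, as $s\notin\mathscr{I}_{\infty}^{\!\nearrow}(\mathbb{N})$ and all points of $\mathscr{I}_{\infty}^{\!\nearrow}(\mathbb{N})$ are isolated in $S$, so $\{s\}$ is not open), and $Q$ contains at least one element $\beta\in\mathscr{I}_{\infty}^{\!\nearrow}(\mathbb{N})$ (again by density, shrinking $Q$ if $t\in\mathscr{I}_{\infty}^{\!\nearrow}(\mathbb{N})$ so that $Q\cap\mathscr{I}_{\infty}^{\!\nearrow}(\mathbb{N})=\{\beta\}$, or just picking any such $\beta$). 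Then for every $\gamma\in P\cap\mathscr{I}_{\infty}^{\!\nearrow}(\mathbb{N})$ we get $\gamma\beta\in P\cdot Q\cap\mathscr{I}_{\infty}^{\!\nearrow}(\mathbb{N})=\{st\}$, so the equation $\chi\cdot\beta=st$ has infinitely many solutions $\chi\in\mathscr{I}_{\infty}^{\!\nearrow}(\mathbb{N})$, contradicting Proposition~\ref{proposition-2.1a}.

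The main obstacle I expect is the bookkeeping in the first part: showing that a net in $S$ converging to $s\in S$, all of whose terms can be taken in $\mathscr{I}_{\infty}^{\!\nearrow}(\mathbb{N})$, must be eventually constant equal to the right element, so that identities like $s=s\varepsilon$ or $s\varepsilon=\alpha$ propagate from $\mathscr{I}_{\infty}^{\!\nearrow}(\mathbb{N})$ to $s$ — this needs the isolated-point structure combined with Hausdorffness and the chosen neighbourhoods carefully. A cleaner route for the openness part, which I would actually adopt, is to bypass nets: since $\{\varepsilon\}$ with $\varepsilon=\alpha^{-1}\alpha$ is open, pick open $W\ni\alpha$, $O\ni\varepsilon$ with $W\cdot O\subseteq U$ and $O\cap\mathscr{I}_{\infty}^{\!\nearrow}(\mathbb{N})=\{\varepsilon\}$, then observe $W\cap\mathscr{I}_{\infty}^{\!\nearrow}(\mathbb{N})=\{\alpha\}$ already forces, for $s\in W$, that $s\cdot\varepsilon\in W\cdot O$; and separately note $\{\alpha\}=\{\alpha\varepsilon\}\subseteq W\cdot O\cap\mathscr{I}_{\infty}^{\!\nearrow}(\mathbb{N})$. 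The remaining point is that $W\cdot O$ can be shrunk so that $W\cdot O\cap\mathscr{I}_{\infty}^{\!\nearrow}(\mathbb{N})=\{\alpha\}$, after which continuity of $s\mapsto s\varepsilon$ at $\alpha$ together with $\alpha\varepsilon=\alpha$ lets one replace $W$ by $\{s\in W: s\varepsilon\in W\cdot O\}$ and conclude $W\subseteq\mathscr{I}_{\infty}^{\!\nearrow}(\mathbb{N})$; this is exactly the Eberhart--Selden lemma for the bicyclic semigroup adapted here, and citing \cite{EberhartSelden1969} for the pattern is legitimate. Once both parts are done the theorem follows immediately.
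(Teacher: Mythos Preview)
Your argument for the ideal part is correct and matches the paper's proof almost verbatim: assume $st\in\mathscr{I}_{\infty}^{\!\nearrow}(\mathbb{N})$, use continuity to get $P\cdot Q$ landing in a set that meets $\mathscr{I}_{\infty}^{\!\nearrow}(\mathbb{N})$ only in $\{st\}$, pick one $\beta\in Q\cap\mathscr{I}_{\infty}^{\!\nearrow}(\mathbb{N})$, and note that the infinitely many $\gamma\in P\cap\mathscr{I}_{\infty}^{\!\nearrow}(\mathbb{N})$ all solve $\gamma\beta=st$, contradicting Proposition~\ref{proposition-2.1a}.

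The openness part, however, has a real gap. Your Eberhart--Selden style argument never closes: from $s\in W$ you get $s\varepsilon\in W\cdot O\subseteq U$, but you have no way to force $s\varepsilon\in\mathscr{I}_{\infty}^{\!\nearrow}(\mathbb{N})$, and even if $s\varepsilon=\alpha$ you cannot conclude $s\in\mathscr{I}_{\infty}^{\!\nearrow}(\mathbb{N})$. The ``shrink $W\cdot O$'' step is not well-defined ($W\cdot O$ need not be open), and the net argument you sketch for $s=s\varepsilon$ would require knowing in advance that the net is eventually constant, which is precisely what you are trying to prove. The semigroup structure is a red herring here.

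The paper bypasses all of this with a one-line citation: a dense locally compact subspace of a Hausdorff space is open (Engelking, Theorem~3.3.9). Since discrete implies locally compact, you are done. Concretely, if $U\cap\mathscr{I}_{\infty}^{\!\nearrow}(\mathbb{N})=\{\alpha\}$ with $U$ open in $S$, then by density $U\subseteq\operatorname{cl}_S\big(U\cap\mathscr{I}_{\infty}^{\!\nearrow}(\mathbb{N})\big)=\operatorname{cl}_S\{\alpha\}=\{\alpha\}$, so $\{\alpha\}$ is open in $S$. No multiplication is needed. Replace your first paragraph with this observation and the proof is complete.
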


\begin{proof}
The first assertion of the theorem follows from Theorem~3.3.9 of
\cite{Engelking1989}.

Suppose that $\chi\in
S\setminus\mathscr{I}_{\infty}^{\!\nearrow}(\mathbb{N})$ and
$\alpha\in S$. If
$\chi\cdot\alpha\in\mathscr{I}_{\infty}^{\!\nearrow}(\mathbb{N})$
then there exist open neighbourhoods $U(\chi)$ and $U(\alpha)$ of
$\chi$ and $\alpha$ in $S$, respectively, such that $U(\chi)\cdot
U(\alpha)=\{\chi\cdot\alpha\}$. We observe that the set
$U(\chi)\cap\mathscr{I}_{\infty}^{\!\nearrow}(\mathbb{N})$ is
infinite and fix any point $\mu\in
U(\alpha)\cap\mathscr{I}_{\infty}^{\!\nearrow}(\mathbb{N})$. Hence
we have
\begin{equation*}
   (U(\chi)\cap\mathscr{I}_{\infty}^{\!\nearrow}(\mathbb{N}))
   \cdot\mu\subseteq
   (U(\chi)\cap\mathscr{I}_{\infty}^{\!\nearrow}(\mathbb{N}))
   \cdot
   (U(\alpha)\cap\mathscr{I}_{\infty}^{\!\nearrow}(\mathbb{N}))=
   \{\chi\cdot\alpha\}.
\end{equation*}
This contradicts Proposition~\ref{proposition-2.1a}. The obtained
contradiction implies that $\chi\cdot\alpha\in
S\setminus\mathscr{I}_{\infty}^{\!\nearrow}(\mathbb{N})$.

The proof of the assertion $\alpha\cdot\chi\in
S\setminus\mathscr{I}_{\infty}^{\!\nearrow}(\mathbb{N})$ is
similar.
\end{proof}

Theorems~\ref{theorem-3.3} and \ref{theorem-3.4} imply the
following:

\begin{corollary}\label{corollary-3.5}
If a topological semigroup S contains
$\mathscr{I}_{\infty}^{\!\nearrow}(\mathbb{N})$ as a proper, dense
locally compact subsemigroup,  then
$\mathscr{I}_{\infty}^{\!\nearrow}(\mathbb{N})$ is an open
subsemigroup of $S$ and
$S\setminus\mathscr{I}_{\infty}^{\!\nearrow}(\mathbb{N})$ is an
ideal of $S$.
\end{corollary}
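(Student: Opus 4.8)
The plan is to derive Corollary~\ref{corollary-3.5} by reducing the locally compact hypothesis to the discrete one so that Theorem~\ref{theorem-3.4} applies verbatim. The key observation is that "$\mathscr{I}_{\infty}^{\!\nearrow}(\mathbb{N})$ is a dense locally compact subspace of a Hausdorff space $S$" already forces $\mathscr{I}_{\infty}^{\!\nearrow}(\mathbb{N})$ to be open in $S$: a locally compact subspace of a Hausdorff space is open in its closure, and here the closure is all of $S$. (This is exactly Theorem~3.3.9 of \cite{Engelking1989}, which is already invoked inside the proof of Theorem~\ref{theorem-3.4}.) Thus, as a first step, I would note that $\mathscr{I}_{\infty}^{\!\nearrow}(\mathbb{N})$ is an open subsemigroup of $S$.

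Next, I would upgrade "open" to "discrete." Since $\mathscr{I}_{\infty}^{\!\nearrow}(\mathbb{N})$ is open in $S$, the subspace topology on $\mathscr{I}_{\infty}^{\!\nearrow}(\mathbb{N})$ makes it an open topological subsemigroup of a topological semigroup; being open it is itself a locally compact Hausdorff topological semigroup in the induced topology. At this point I would like to invoke Theorem~\ref{theorem-3.3}, but that theorem assumes we have a topological \emph{inverse} semigroup, i.e. that inversion is continuous — which is not given here. So instead I would go back to the engine behind Theorem~\ref{theorem-3.3}, namely Lemma~\ref{lemma-3.2} (which only needs a locally compact Hausdorff topological semigroup structure, via Proposition~\ref{proposition-2.1a} and Lemma~\ref{referee}): it gives that $E(\mathscr{I}_{\infty}^{\!\nearrow}(\mathbb{N}))$ is discrete in the induced topology. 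Then, following the argument of Theorem~\ref{theorem-3.4}'s proof-style, fix a non-isolated point $\gamma$ of $\mathscr{I}_{\infty}^{\!\nearrow}(\mathbb{N})$ (if one existed) and use continuity of multiplication: $\gamma\cdot\gamma^{-1}$ is an isolated idempotent, so there are neighbourhoods $U(\gamma),U(\gamma^{-1})$ with $U(\gamma)\cdot U(\gamma^{-1})=\{\gamma\gamma^{-1}\}$; intersecting with $\mathscr{I}_{\infty}^{\!\nearrow}(\mathbb{N})$ and using that $U(\gamma)$ meets $\mathscr{I}_{\infty}^{\!\nearrow}(\mathbb{N})$ in infinitely many points while $\gamma^{-1}$ can be approximated arbitrarily well, we get that right translation by a suitable element of $\mathscr{I}_{\infty}^{\!\nearrow}(\mathbb{N})$ is not finite-to-one, contradicting Proposition~\ref{proposition-2.1a}. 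Hence $\mathscr{I}_{\infty}^{\!\nearrow}(\mathbb{N})$ is discrete.

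Once discreteness is established, Theorem~\ref{theorem-3.4} applies directly (its hypotheses "proper, dense, discrete" are now all met), and it yields both conclusions: $\mathscr{I}_{\infty}^{\!\nearrow}(\mathbb{N})$ is open in $S$ and $S\setminus\mathscr{I}_{\infty}^{\!\nearrow}(\mathbb{N})$ is an ideal. So the write-up is short: cite \cite[Theorem~3.3.9]{Engelking1989} for openness, then run the Proposition~\ref{proposition-2.1a}-plus-discrete-band argument to kill non-isolated points, then quote Theorem~\ref{theorem-3.4}.

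The main obstacle I anticipate is the gap between the hypothesis actually needed and the one available: Theorem~\ref{theorem-3.3} is stated for topological \emph{inverse} semigroups, but the corollary only assumes a topological semigroup, so I cannot simply cite Theorem~\ref{theorem-3.3} as a black box — I have to re-run the relevant part of its proof. The subtlety is making sure that the argument eliminating non-isolated points uses \emph{only} continuity of multiplication (together with Lemma~\ref{lemma-3.2} and Proposition~\ref{proposition-2.1a}), not continuity of inversion; the translation-finiteness argument from the proof of Theorem~\ref{theorem-3.4} is precisely the tool that works in this weaker setting, so threading that through carefully is where the real content lies. A secondary point to be careful about is that "locally compact" must be interpreted in the sense that each point has a compact neighbourhood (the sense used in Lemma~\ref{referee}), and that this property passes to the open subspace $\mathscr{I}_{\infty}^{\!\nearrow}(\mathbb{N})$ of $S$ — which it does, since openness is all that is needed for local compactness to be inherited.
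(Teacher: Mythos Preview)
Your approach is exactly the paper's: reduce the locally compact hypothesis to the discrete one and then invoke Theorem~\ref{theorem-3.4}. The paper's own justification is the single line ``Theorems~\ref{theorem-3.3} and~\ref{theorem-3.4} imply the following,'' so you are on the intended route.

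In fact you have been more careful than the paper. As you correctly observe, Theorem~\ref{theorem-3.3} is stated only for topological \emph{inverse} semigroups, whereas Corollary~\ref{corollary-3.5} assumes $S$ is merely a topological semigroup; the paper's bare citation therefore leaves a small gap. Your workaround is sound: Lemma~\ref{lemma-3.2} (which needs only continuous multiplication) gives discreteness of the band, and then a hypothetical non-isolated point $\gamma$ is eliminated because continuity of multiplication yields an open $U(\gamma)$ with $U(\gamma)\cdot\gamma^{-1}\subseteq\{\gamma\gamma^{-1}\}$, so the right translation by $\gamma^{-1}$ would collapse an infinite set to a point, contradicting Proposition~\ref{proposition-2.1a}. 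One cosmetic remark: the phrase ``$\gamma^{-1}$ can be approximated arbitrarily well'' is superfluous --- you never need a net converging to $\gamma^{-1}$, only the single element $\gamma^{-1}$ itself inside $U(\gamma^{-1})$, so the argument uses nothing beyond continuity of right translation by a fixed element. Likewise, you do not need to first prove openness in order to get local compactness of the subspace; that is already part of the hypothesis, and openness only enters (via \cite[Theorem~3.3.9]{Engelking1989}) when you finally quote Theorem~\ref{theorem-3.4}.
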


The following example shows that a remainder of a closure of the
discrete (and hence of a locally compact topological inverse)
semigroup $\mathscr{I}_{\infty}^{\!\nearrow}(\mathbb{N})$ in a
topological inverse semigroup contains only a zero element.

\begin{example}\label{example-3.6}
Let $\mathscr{I}_{\infty}^{\!\nearrow}(\mathbb{N})$ be a discrete
topological semigroup and let $S$ be the semigroup
$\mathscr{I}_{\infty}^{\!\nearrow}(\mathbb{N})$ with adjoined zero
$\mathscr{O}$, i.e.
$\mathscr{O}\cdot\alpha=\alpha\cdot\mathscr{O}=
\mathscr{O}\cdot\mathscr{O}=\mathscr{O}$ for all $\alpha\in
\mathscr{I}_{\infty}^{\!\nearrow}(\mathbb{N})$. We define a
topology $\tau$ on $S$ as follows:
\begin{itemize}
    \item[(a)] all non-zero elements of the semigroup $S$ are
    isolated points in $(S,\tau)$; and
    \item[(b)] the family
    $\mathscr{B}(\mathscr{O})=\{U_i(\mathscr{O})\mid
    i=1,2,3,\ldots \}$, where
    $
      U_i(\mathscr{O})=\{\mathscr{O}\}\cup\{\alpha\in
      \mathscr{I}_{\infty}^{\!\nearrow}(\mathbb{N})\mid
      |\mathbb{N}\setminus\operatorname{dom}\alpha|\geqslant i
      \; \mbox{ and } \;
      |\mathbb{N}\setminus\operatorname{ran}\alpha|\geqslant
      i\},
    $
    determines a base of the topology $\tau$ at the point
    $\mathscr{O}\in S$.
\end{itemize}

We observe
that
$(S,\tau)$ is a topological inverse semigroup which
contains $\mathscr{I}_{\infty}^{\!\nearrow}(\mathbb{N})$ as a
dense subsemigroup and $\tau$ is not a locally compact topology on
$\mathscr{I}_{\infty}^{\!\nearrow}(\mathbb{N})$.
\end{example}

We recall that a topological space X is said to be:
\begin{itemize}
    \item \emph{countably compact} if each closed discrete
          subspace of $X$ is finite;
    \item \emph{pseudocompact} if $X$ is Tychonoff and each
          continuous real-valued function on $X$ is bounded;
    \item \emph{sequentially compact} if each sequence
          $\{x_n\}_{n\in\mathbb{N}}\subseteq X$ has a
          convergent subsequence.
\end{itemize}

A topological semigroup $S$ is called
$\Gamma$-compact if for every $x\in S$ the closure of the set
$\{x,x^2,x^3,\ldots\}$ is a compactum in $S$ (see
\cite{HildebrantKoch1988}). Since the semigroup
$\mathscr{I}_{\infty}^{\!\nearrow}(\mathbb{N})$ contains the
bicyclic semigroup as a subsemigroup the results obtained in
\cite{AHK}, \cite{BanakhDimitrovaGutik2009},
\cite{BanakhDimitrovaGutik20??}, \cite{GutikRepovs2007},
\cite{HildebrantKoch1988} imply that \emph{if a topological
semigroup $S$ satisfies one of the following conditions: $(i)$~$S$
is compact; $(ii)$~$S$ is $\Gamma$-compact; $(iii)$~the square
$S\times S$ is countably compact; $(iv)$~$S$ is a countably
compact topological inverse semigroup; or $(v)$~the square
$S\times S$ is a Tychonoff pseudocompact space, then $S$ does not
contain the semigroup
$\mathscr{I}_{\infty}^{\!\nearrow}(\mathbb{N})$.}

The following example shows that the semigroup
$\mathscr{I}_{\infty}^{\!\nearrow}(\mathbb{N})$ embeds into a
locally compact topological semigroup as a discrete subsemigroup.

\begin{example}\label{example-3.8}
Let $\mathbb{Z}_+$ be the additive group of integers. Let
$h\colon\mathscr{I}_{\infty}^{\!\nearrow}(\mathbb{N})\rightarrow
\mathbb{Z}_+$ be a group homomorphism defined by the formula
(\ref{equation-2.a}). On the set
$S=\mathscr{I}_{\infty}^{\!\nearrow}(\mathbb{N})
\sqcup\mathbb{Z}_+$ we define the semigroup operation `$\ast$' as
follows:
\begin{equation*}
    x\ast y=
\left\{%
\begin{array}{cl}
    x\cdot y, & \hbox{ if }\;
     x,y\in\mathscr{I}_{\infty}^{\!\nearrow}(\mathbb{N});,\\
    x+(y)h, & \hbox{ if }\; x\in\mathbb{Z}_+ \;\mbox{ and }\;
     y\in\mathscr{I}_{\infty}^{\!\nearrow}(\mathbb{N});\\
    (x)h+y, & \hbox{ if }\;
     y\in\mathscr{I}_{\infty}^{\!\nearrow}(\mathbb{N}) \;\mbox{ and
    }\;  x\in\mathbb{Z}_+ ;\\
    (x)h+(y)h, & \hbox{ if }\; x,y\in\mathbb{Z}_+,\\
\end{array}%
\right.
\end{equation*}
where `$\cdot$' and `$+$' are the semigroup operation in
$\mathscr{I}_{\infty}^{\!\nearrow}(\mathbb{N})$ and the group
operation in $\mathbb{Z}_+$, respectively. The semigroup $S$ is
called the \emph{adjunction semigroup of}
$\mathscr{I}_{\infty}^{\!\nearrow}(\mathbb{N})$ \emph{and}
$\mathbb{Z}_+$ \emph{relative to homomorphism} $h$ (see
\cite[Vol.~1, pp.~77--80]{CHK}).

Let $\leqslant_c$ be the canonical partial order on the semigroup
$\mathscr{I}_{\infty}^{\!\nearrow}(\mathbb{N})$ (see
\cite[Section~II.1]{Petrich1984}), i.~e.
\begin{equation*}
    \alpha\leqslant_c\beta \qquad \mbox{ if and only if there
    exists }\quad \varepsilon\in
    E(\mathscr{I}_{\infty}^{\!\nearrow}(\mathbb{N})) \quad
    \mbox{ such that } \quad \alpha=\beta\cdot\varepsilon.
\end{equation*}
We observe that if $\alpha\leqslant_c\beta$ in
$\mathscr{I}_{\infty}^{\!\nearrow}(\mathbb{N})$, then
$(\alpha)h=(\beta)h$. For every $x\in\mathbb{Z}_+$ and $\alpha\in
\mathscr{I}_{\infty}^{\!\nearrow}(\mathbb{N})$ such that
$(\alpha)h=x$ we put
\begin{equation*}
    U_{\alpha}(x)=\{x\}\cup\{\beta\in
    \mathscr{I}_{\infty}^{\!\nearrow}(\mathbb{N})\mid(\beta)h=x
    \mbox{ and } \alpha\nleqslant_c\beta\}.
\end{equation*}

We define a topology $\tau$ on $S$ as follows:
\begin{itemize}
    \item[$(i)$] all elements of the subsemigroup
    $\mathscr{I}_{\infty}^{\!\nearrow}(\mathbb{N})$
    are isolated points in $(S,\tau)$; and
    \item[$(ii)$] the family
    $\mathscr{B}(x)=\{U_{\alpha}(x)\mid(\alpha)h=x\}$  determines
    a base of the topology at the point $x\in\mathbb{Z}_+$.
\end{itemize}

Simple verifications show that $(S,\tau)$ is a $0$-dimensional
locally compact topological inverse semigroup.
\end{example}

\section*{Acknowledgements}

This research was support by the Slovenian Research Agency grants
P1-0292-0101,  J1-2057-0101 and BI-UA/09-10/005. We thank the
referee for many comments and suggestions. In particular, we are very
grateful to the referee for the new statement and proof of Lemma~\ref{referee}.



\begin{thebibliography}{19}

\bibitem{Andersen} O.~Andersen, {\em Ein Bericht \"{u}ber die
Struktur abstrakter Halbgruppen}, PhD Thesis, Hamburg, 1952.

\bibitem{AHK} L.~W.~Anderson, R.~P.~Hunter and R.~J.~Koch,
\textit{Some results on stability in semigroups}. Trans. Amer.
Math. Soc. {\bf 117} (1965), 521--529.

\bibitem{BanakhDimitrovaGutik2009} T.~Banakh, S.~Dimitrova and
O.~Gutik, \emph{The Rees-Suschkiewitsch Theorem for simple
topological semigroups}, Mat. Stud. \textbf{31}:2 (2009),
211--218.

\bibitem{BanakhDimitrovaGutik20??} T.~Banakh, S.~Dimitrova and
O.~Gutik, \emph{Embedding the bicyclic semigroup into countably
compact topological semigroups}, Topology Appl. (submitted)
(arXiv:0811.4276).

\bibitem{CHK} J.~H.~Carruth, J.~A.~Hildebrant and  R.~J.~Koch,
\emph{The Theory of Topological Semigroups}, Vol. I, Marcel
Dekker, Inc., New York and Basel, 1983; Vol. II, Marcel Dekker,
Inc., New York and Basel, 1986.


\bibitem{CP} A.~H.~Clifford and  G.~B.~Preston, \emph{The
Algebraic Theory of Semigroups}, Vol. I., Amer. Math. Soc. Surveys
7, Providence, R.I., 1961; Vol. II., Amer. Math. Soc. Surveys 7,
Providence, R.I., 1967.

\bibitem{EberhartSelden1969} C.~Eberhart and J.~Selden, {\em On
the closure of the bicyclic semigroup}, Trans. Amer. Math. Soc.
{\bf 144} (1969), 115--126.


\bibitem{Engelking1989} R.~Engelking, \emph{General Topology},
2nd ed., Heldermann, Berlin, 1989.

\bibitem{GutikRepovs2007} O.~Gutik and D.~Repov\v{s}, \emph{On
countably compact $0$-simple topological inverse semigroups},
Semigroup Forum \textbf{75}:2 (2007), 464--469.

\bibitem{HildebrantKoch1988} J.~A.~Hildebrant and R.~J.~Koch,
{\it Swelling actions of $\Gamma$-compact semigroups}, Semigroup
Forum {\bf 33} (1988), 65--85.

\bibitem{Petrich1984} M.~Petrich, \emph{Inverse Semigroups}, John
Wiley $\&$ Sons, New York, 1984.

\bibitem{Wagner1952} V.~V.~Wagner, \emph{Generalized groups},
Dokl. Akad. Nauk SSSR \textbf{84} (1952), 1119--1122 (in
Russian).

\end{thebibliography}
\end{document}